\DeclareMathOperator{\dist}{dist}
\DeclareMathOperator{\Vol}{Vol}
\theoremstyle{plain}
\newtheorem{theorem}{Theorem}[section]
\newtheorem{defi}[theorem]{Definition}
\newtheorem{corollary}[theorem]{Corollary}
\newtheorem{lemma}[theorem]{Lemma}
\newtheorem{notation}[theorem]{Notation}
\newtheorem{remark}{Remark}
\newtheorem{question}[theorem]{Question}
\newtheorem{open}[theorem]{Open problem}
\newtheorem{claim}[theorem]{Claim}
\newtheorem{proposition}[theorem]{Proposition}
\newtheorem{fact}[theorem]{Fact}
\newcommand{\RR} {\mathbb R}
\newcommand{\NN} {\mathbb N}
\newcommand{\NNN}{\mathcal{N}}
\newcommand{\LLL}{\mathcal{L}}
\newcommand{\pa} {\partial}
\newcommand{\beq} {\begin{equation}}
\newcommand{\eeq} {\end{equation}}
\numberwithin{equation}{section}
\newcommand{\heart}{\ensuremath\heartsuit}
\newcommand{\SM}[1]{\textcolor{blue}{(SM: #1)}}
\begin{document}
\allowdisplaybreaks

\title{
Eigenfunction localization and nodal geometry on dumbbell domains}

\author[Indian Institute of Technology Bombay]{Saikat Maji} 
 \address[Saikat Maji]{Department of Mathematics, Indian Institute of Technology Bombay, Powai, Maharashtra 400076, India.}
  \email{saikatmaji1997@gmail.com}

  \author[Iowa State University]{Soumyajit Saha} 
\address[Soumyajit Saha]{Department of Mathematics, Iowa State University, Ames, Iowa 50011, USA.}  \email{ssaha1@iastate.edu}

\maketitle

\begin{abstract}
In this article, we study the location of the first nodal line and hot spots under different boundary conditions on dumbbell-shaped domains. Apart from its intrinsic interest, dumbbell domains are also geometrically contrasting to the extensively studied convex domains. 
For dumbbells with Dirichlet boundary, we investigate the location of the supremum level set 
of the first eigenfunction and discuss the optimal positioning of obstacles. 
Considering the other end of level sets, the nodal sets, we establish that the first nodal set of a Neumann dumbbell (with sufficiently narrow connectors) lies within a neighborhood of the connectors.
The article demonstrates the utilization of the asymptotic $L^2$-localization (or its absence, characterized by either Dirichlet or Neumann boundaries) of dumbbell domains in tackling the aforementioned nodal geometry problems.

\end{abstract}


\section{Introduction and overview}
Consider a bounded Euclidean domain $\Omega$ and the Laplace operator $\Delta = \sum_j \pa_j^2$ with either the Dirichlet boundary condition 
\begin{equation*}\label{eqtn: Dirichlet condition}
    \varphi(x)=0, \hspace{5pt} x\in \pa \Omega,
\end{equation*}
or the Neumann boundary condition 
\begin{equation*}\label{eqtn:Neumann_condition}
    \pa_\eta\varphi(x)=0, \hspace{5pt} x\in \pa \Omega,
\end{equation*}
where $\eta$ denotes the outward unit normal on $\pa \Omega$.
Recall that if $\Omega$ has a reasonably regular boundary, the Laplacian $-\Delta$ with Dirichlet or Neumann boundary has a discrete spectrum (repeated with multiplicity)
$$ 
0\leq\lambda_1 \leq \dots \leq \lambda_k \leq \dots \nearrow \infty,
$$
 with corresponding (real-valued $L^2$-normalized) eigenfunctions $\varphi_k$.  Also, recall that the first eigenvalue is always simple, and the first Dirichlet eigenvalue is non-zero while its Neumann counterpart is always zero.  Let  $\NNN(\varphi) := \{ x \in M: \varphi (x) = 0\}$ denote the nodal set corresponding to any eigenfunction $\varphi$. Any connected component of $M \setminus \mathcal{N}({\varphi})$ is referred to as a nodal domain of the eigenfunction $\varphi$. 

One of the extensively studied problems in nodal geometry revolves around determining the location of extrema (commonly referred to as hot or cold spots) of the first non-trivial eigenfunction within a domain with either Dirichlet or Neumann boundaries.
The celebrated {\em hot spot conjecture} by Rauch states that the extrema of the first non-constant Neumann eigenfunction is attained on the boundary (see \cite{BB} for different variations of the conjecture). On the contrary, for Dirichlet eigenfunctions, owing to the boundary condition, the extrema is always located in the interior of the domain. This leads to the question: \emph{can we gain more precise information on the location of the Dirichlet hot spots in relation to the domain?} In regards to the Neumann hot spots, significant progress has been made for domains with symmetry and certain convex domains (see \cite{JM, Ste} and references therein). 
In Section \ref{sec: obstactles} of this article, we address the Dirichlet case. In the following theorem, 
we delineate the location of the hot spot within a Dirichlet dumbbell in terms of the hot spot of its base domains.
\begin{theorem}\label{thm: main result}
Consider two bounded domains $\Omega_1, \Omega_2\subset\RR^n$, where $\Omega_1$ is convex and $\Omega_2$ is simply connected, and a one-parameter family of dumbbells $\Omega_\epsilon$ (as described in Section \ref{sec: construction}). 
Let $\lambda_1(\Omega_i)$ be the first eigenvalue of 
$\Omega_i$ ($i=1, 2$) and $\varphi_{1, \epsilon}, \varphi_1^{\Omega_1}$ denote the first eigenfunction of $\Omega_\epsilon, \Omega_1$ respectively. Moreover, assume that $\lambda_1(\Omega_1)< \lambda_1(\Omega_2)$ and $\varphi_1^{\Omega_1}$ is non-negative.  Given $\delta>0$ , there exists  $\epsilon'>0$ (depending on $\delta$) such that
$$\LLL(\Omega_\epsilon) \subset B(x_0, \delta)~~~ \text{for any}~~~ \epsilon\leq \epsilon',$$ 
where $\LLL(\Omega_\epsilon)$ denotes the set of maximum points of $\varphi_{1, \epsilon}$ and $x_0$ denotes the unique maximum point of $\varphi_1^{\Omega_1}$.
\end{theorem}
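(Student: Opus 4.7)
The plan is to exploit the asymptotic $L^2$-localization of the first Dirichlet eigenfunction on the dumbbell (the central theme of the paper), upgrade it to $L^\infty$ convergence on $\Omega_1$ via standard elliptic regularity, and then invoke the uniqueness of the maximum $x_0$ of $\varphi_1^{\Omega_1}$ to force $\LLL(\Omega_\epsilon) \subset B(x_0, \delta)$ for $\epsilon$ sufficiently small.

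First, using the hypothesis $\lambda_1(\Omega_1) < \lambda_1(\Omega_2)$ and the localization results developed earlier in the paper, one has $\lambda_1(\Omega_\epsilon) \to \lambda_1(\Omega_1)$ and, with an appropriate sign choice, $\varphi_{1,\epsilon}|_{\Omega_1} \to \varphi_1^{\Omega_1}$ in $L^2(\Omega_1)$ while $\|\varphi_{1,\epsilon}\|_{L^2(\Omega_\epsilon \setminus \Omega_1)} \to 0$. Since the first Dirichlet eigenfunction is single-signed, we take $\varphi_{1,\epsilon} \geq 0$. Because it solves $-\Delta \varphi_{1,\epsilon} = \lambda_1(\Omega_\epsilon) \varphi_{1,\epsilon}$ with uniformly bounded eigenvalue, interior Moser iteration and boundary Schauder estimates on the smooth portion of $\pa \Omega_1$ upgrade the $L^2$ convergence to
\[
\|\varphi_{1,\epsilon} - \varphi_1^{\Omega_1}\|_{L^\infty(K)} \to 0 \quad \text{for every compact } K \Subset \Omega_1.
\]

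On the neck $C_\epsilon$ and on $\Omega_2$, the standard $L^\infty$-$L^2$ estimate for non-negative solutions of $-\Delta u = \lambda u$ with $\lambda$ uniformly bounded gives $\|\varphi_{1,\epsilon}\|_{L^\infty(\Omega_2)} + \|\varphi_{1,\epsilon}\|_{L^\infty(C_\epsilon)} \lesssim \|\varphi_{1,\epsilon}\|_{L^2(\Omega_\epsilon \setminus \Omega_1)} \to 0$. For the neck this uses that $\varphi_{1,\epsilon}$ vanishes on the lateral walls and that $\lambda_1(\Omega_\epsilon)$ is far below the first Dirichlet eigenvalue of the thin neck (which is of order $\epsilon^{-2}$), so no resonance sustains large values there. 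Setting $m := \varphi_1^{\Omega_1}(x_0)$, by continuity and uniqueness of the maximum there exists $\eta > 0$ with $\sup_{\overline{\Omega}_1 \setminus B(x_0, \delta/2)} \varphi_1^{\Omega_1} \leq m - \eta$. For $\epsilon$ small, combining the two sets of bounds yields $\varphi_{1,\epsilon}(x_0) > m - \eta/3$ and $\sup_{\Omega_\epsilon \setminus B(x_0, \delta)} \varphi_{1,\epsilon} < m - \eta/3$, hence $\LLL(\Omega_\epsilon) \subset B(x_0, \delta)$.

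The main obstacle is the narrow collar of $\Omega_1$ where the neck attaches: classical boundary regularity applies on $\pa\Omega_1$ away from this attachment, but the attachment region has $\epsilon$-degenerating geometry, so uniform convergence up to the boundary is not automatic there. Since $\varphi_{1,\epsilon}$ vanishes on the lateral walls of the neck and the collar has diameter $O(\epsilon)$, a barrier comparison against the small boundary values coming from the sup bound above keeps $\varphi_{1,\epsilon}$ uniformly small on this collar, completing the cover of $\Omega_\epsilon \setminus B(x_0, \delta)$ by sets on which $\varphi_{1,\epsilon}$ is controlled.
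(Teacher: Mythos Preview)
Your overall strategy---$L^2$ localization, upgrade to $L^\infty$, then exploit the uniqueness of $x_0$---matches the paper's. The paper executes the upgrade differently, however: it applies Theorem~8.15 of Gilbarg--Trudinger \emph{globally} on $\Omega_\epsilon$ to the difference $\varphi_{1,\epsilon}-\varphi_{1,0}$ (with $\varphi_{1,0}$ extended by zero), which vanishes on $\partial\Omega_\epsilon$. The resulting bound $\|\varphi_{1,\epsilon}-\varphi_{1,0}\|_{L^\infty(\Omega_\epsilon)}\to 0$ has a constant depending only on $|\Omega_{\epsilon_0}|$ and $\lambda_1^{\Omega_1}$, so the collar issue you single out never arises.

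The genuine soft spot in your argument is the connector. Your claimed ``standard $L^\infty$--$L^2$ estimate'' giving $\|\varphi_{1,\epsilon}\|_{L^\infty(C_\epsilon)}\lesssim\|\varphi_{1,\epsilon}\|_{L^2(\Omega_\epsilon\setminus\Omega_1)}$ with a constant uniform in $\epsilon$ is not justified: local De Giorgi--Nash--Moser on a tube of width $\epsilon$ carries a factor $\epsilon^{-n/2}$, and your ``no resonance'' remark is a correct heuristic but not an estimate. The paper sidesteps this entirely by invoking the inner-radius estimate at max points due to Georgiev--Mukherjee (sharpened by Charron--Mangoubi): any maximum of the ground state has a ball of radius comparable to $1/\sqrt{\lambda_{1,\epsilon}(\log\lambda_{1,\epsilon})^{n-2}}$ inscribed around it in $\Omega_\epsilon$. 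Since $\lambda_{1,\epsilon}\to\lambda_1^{\Omega_1}$, this radius is bounded below independently of $\epsilon$, so no max point can lie in the connector once $\epsilon$ is small. Your barrier idea can be made to work as well (build a positive supersolution on the tube using that the cross-sectional Dirichlet eigenvalue is of order $\epsilon^{-2}\gg\lambda_{1,\epsilon}$), but as written this step is a gap.
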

The above theorem tells us that the hot spots of the ground state Dirichlet eigenfunction of a dumbbell are located deep within one of its subdomains when the connector width is narrow enough. 
\begin{remark}
    The assumption that $\Omega_1$ is convex is not crucial. The above theorem is true for any domain $\Omega_1$ with a unique maximum point of the ground state Dirichlet eigenfunction.
\end{remark}

Understanding the location of Dirichlet hot spots also helps us address the following optimization problem: \emph{given a simply-connected domain, can we determine the optimal position for creating a hole so that the first eigenvalue of the perforated domain is maximized (or minimized)?} The hole is referred to as the \emph{obstacle.} In general, any bounded domain can be regarded as an obstacle. However, in this article, we will restrict ourselves to convex obstacles. More precisely, given an Euclidean domain $\Omega\subset\RR^n$ and a convex domain $D\subset \RR^n$, consider the family of domains
\begin{equation*}
    \mathcal{F}(\Omega, D)=\{\Omega\setminus \overline{(y+D)}: y+D \text{ is a translate of } D \text{ such that } (y+D)\subset \Omega \}.
\end{equation*}
The problem is to determine the optimal translate $y_0+D$ for which 
\begin{equation*}
   \max_{\mathcal{F}(\Omega, D)} \lambda_1(\Omega\setminus \overline{(y+D)}) =  \lambda_1(\Omega\setminus \overline{(y_0+D)}),
\end{equation*}
The problem was first investigated by Hersch in \cite{Hersch} for the planer annular domains (i.e., disc with a circular obstacle) and Kesavan in \cite{Kesavan} proved an analogous result in higher dimensions using the \emph{moving plane method}. In \cite{HarellKrugerKurata}, Harrell, Kröger, and Kurata studied the obstacle problem on domains with \emph{interior reflection property} which was later on generalized by Georgiev and Mukherjee in \cite{GM} on domains that satisfy a certain asymmetry condition. We also refer our readers to \cite{CR,AAK} (and references therein) for additional insight on the problem.  

As an application of the above Theorem \ref{thm: main result}, we have the following corollary which tells us that in order to find the optimal position of an obstacle in a dumbbell, it is enough to locate the max-point of the ground state eigenfunction for one of the base domains of the dumbbell.

\begin{corollary}\label{cor: obstacle} 
    Let $y_\epsilon+D$ be the optimal translation of the obstacle $D$ in the dumbbell $\Omega_\epsilon$ (as defined in Theorem \ref{thm: main result}) and $x_0$ denotes the unique maximum point of $\varphi_1^{\Omega_1}$. 
     Given $\delta>0$, there exists $\epsilon'>0$ such that for every $\epsilon<\epsilon'$, we have $d(x_0, y_\epsilon+D)<\delta$ whenever the obstacle is sufficiently large (in the sense of Theorem \ref{thm: GM result} below). Here $d(x, A)$ denotes the distance between $x$ and the set $A$. 
\end{corollary}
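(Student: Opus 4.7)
The plan is to combine Theorem \ref{thm: main result} with the Georgiev--Mukherjee obstacle result (Theorem \ref{thm: GM result}) in a two-step fashion. The GM result should tell us that, once the obstacle $D$ is sufficiently large relative to the ambient domain, the optimal translate $y+D$ maximizing $\lambda_1(\Omega_\epsilon \setminus \overline{y+D})$ is constrained to lie within any prescribed neighborhood of the set $\LLL(\Omega_\epsilon)$ of maxima of $\varphi_{1,\epsilon}$. On the other hand, Theorem \ref{thm: main result} asserts precisely that $\LLL(\Omega_\epsilon)$ shrinks into arbitrarily small neighborhoods of the unique hot spot $x_0$ of $\varphi_1^{\Omega_1}$ as $\epsilon\to 0$. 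Thus the corollary follows by chaining these two localizations.

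Concretely, given $\delta>0$, I would first invoke Theorem \ref{thm: GM result} applied to the dumbbell $\Omega_\epsilon$ with the obstacle $D$ sufficiently large, to conclude $d(y_\epsilon+D,\LLL(\Omega_\epsilon))<\delta/2$. Next, I would apply Theorem \ref{thm: main result} with parameter $\delta/2$ to obtain $\epsilon'>0$ such that $\LLL(\Omega_\epsilon)\subset B(x_0,\delta/2)$ for all $\epsilon<\epsilon'$. The triangle inequality then yields
\[
d(x_0,\, y_\epsilon+D) \;\le\; d(x_0,\LLL(\Omega_\epsilon)) + d(\LLL(\Omega_\epsilon),\, y_\epsilon+D) \;<\; \delta,
\]
which is exactly the claim of the corollary.

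The main obstacle in executing this plan is verifying that the hypothesis of Theorem \ref{thm: GM result} is satisfied for the whole family $\{\Omega_\epsilon\}$ in a manner that is uniform in $\epsilon$. Specifically, one must check that the largeness condition on the obstacle, together with the asymmetry / interior reflection assumption underlying the GM theorem, can be imposed independently of $\epsilon$, so that the same obstacle is admissible for every sufficiently small $\epsilon$. Since the hot spot $x_0$ is anchored deep inside the fixed subdomain $\Omega_1$ and the dumbbell $\Omega_\epsilon$ coincides with $\Omega_1$ away from the shrinking connector, the relevant local geometry near $x_0$ reduces to that of $\Omega_1$; this is what we expect to make the uniformity of the GM hypothesis go through with the constructions of Section \ref{sec: construction}.
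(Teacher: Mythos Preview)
Your approach is essentially the paper's: combine Theorem~\ref{thm: main result} with Theorem~\ref{thm: GM result} and conclude by the triangle inequality. Two minor sharpenings from the paper: Theorem~\ref{thm: GM result} actually yields $\dist(\LLL(\Omega_\epsilon), y_\epsilon+D)=0$ (a max point lies in the closed obstacle), so the $\delta/2$ split is unnecessary; and the uniformity issue you flag is handled concretely by observing that $\Omega_1\cup Q_\epsilon$ is $\tfrac14$-asymmetric (from the convexity of $\Omega_1$ and the shape of $Q_\epsilon$), which suffices for the GM estimate since $\LLL(\Omega_\epsilon)\subset\Omega_1$ for small $\epsilon$.
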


So far, we have only studied the fundamental frequency and ground state eigenfunction of a dumbbell, where we make use of the fact that the Dirichlet eigenfunctions of a dumbbell are asymptotically $L^2$-localized on one of the two base domains of a dumbbell. On the contrary, Neumann eigenfunctions can either $L^2$-localize on the base domains or the connector of the dumbbell, or not localize at all. In order to exhibit how such non-localizations affect certain nodal geometry problems on dumbbells,  we now turn our attention to second eigenfunctions, in particular, the nodal set corresponding to the second eigenfunctions. 

We start by recalling the nodal line conjecture by Payne \cite{Payne} which says that: \emph{any Dirichlet second eigenfunction of a bounded Euclidean planar domain cannot have a closed nodal line}. 
This has been extensively studied over the past few decades and we refer our readers to \cite{MS, MS1} for a detailed literature survey and recent developments. 
 However, the conjecture still remains open for simply-connected planar domains. Interestingly, considering simply-connected domains with the Neumann boundary condition, the proof for the conjecture is relatively straightforward (see Proposition \ref{prop: neumannn conj}) and was proved by Pleijel \cite{Pl} years before Payne conjectured it for the Dirichlet case. Then, the question is: \emph{can we find an approximate location of the nodal line corresponding to the second Neumann eigenfunctions?} In this regard, Jerison \cite{Je} proved that the location of the first nodal line for a convex planar domain is near the unique zero of the first non-constant eigenfunction of a certain ordinary differential operator. Atar and Burdzy in \cite{AtarBurdzy2002} took a probabilistic approach and used  ``couplings'' of Brownian motion to determine subregions of several (non-convex) simply-connected planar domains where the first nodal line must intersect.  

 In Section \ref{sec: Neumann dumbbells} of this article, we showcase how the lack of localization of certain Neumann eigenfunctions on dumbbells has an effect on the determination of the first nodal line's location. In particular, we prove the following 
\begin{theorem}[Informal]\label{thm: informal main result 2}
    The nodal line corresponding to the first non-constant Neumann eigenfunction of a dumbbell domain with a sufficiently narrow connector cannot enter too deep inside either of the two base domains of the dumbbell.
\end{theorem}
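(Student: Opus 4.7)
The plan is to first formalize the informal statement as follows: for any pair of compact sets $K_i \Subset \Omega_i$ ($i=1,2$), there exists $\epsilon_0>0$ such that the nodal set $\NNN(\varphi_{2,\epsilon})$ does not intersect $K_1 \cup K_2$ for every $\epsilon<\epsilon_0$. The strategy hinges on the spectral convergence of Neumann dumbbells, which is exactly the \emph{absence} of $L^2$-localization: as $\epsilon \to 0$, the second Neumann eigenfunction spreads out as a nonzero constant on each base domain, preventing the nodal line from penetrating deep into either $\Omega_i$.

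First I would establish that $\lambda_2(\Omega_\epsilon) \to 0$ by a variational argument. The disjoint union $\Omega_1 \sqcup \Omega_2$ has a two-dimensional zero eigenspace spanned by the indicator functions of the two pieces, so a natural test function on $\Omega_\epsilon$ is $u_\epsilon$ that equals $+a_1$ on $\Omega_1$, $-a_2$ on $\Omega_2$, and transitions smoothly across the connector, with $a_1,a_2>0$ chosen so that $\int_{\Omega_\epsilon} u_\epsilon = 0$. The Dirichlet energy is supported on the connector and is $O(\epsilon^{n-1})$, while the $L^2$-norm is $\Theta(1)$, and so the Rayleigh quotient forces $\lambda_2(\Omega_\epsilon) \to 0$. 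Next, by the standard Neumann dumbbell convergence results (Jimbo, Ann\'e, Arrieta, Hempel--Seco--Simon), $\liminf_{\epsilon\to 0}\lambda_3(\Omega_\epsilon) \geq \min(\lambda_2(\Omega_1),\lambda_2(\Omega_2))>0$, so $\varphi_{2,\epsilon}$ is (up to sign) uniquely determined for small $\epsilon$.

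By weak $L^2$ compactness, pass to a subsequential weak limit $\varphi^*$ of $\varphi_{2,\epsilon}$. One must rule out concentration of mass in the shrinking connector: any function concentrated in the thin connector region with zero mean has Rayleigh quotient growing at least like $\epsilon^{-2}$ by a Poincar\'e-type inequality on narrow tubes, contradicting $\lambda_2(\Omega_\epsilon) \to 0$. Passing to the limit in the eigenvalue equation $-\Delta\varphi_{2,\epsilon}=\lambda_2(\Omega_\epsilon)\varphi_{2,\epsilon}$ shows that $\varphi^*|_{\Omega_i}$ is harmonic with vanishing Neumann data, hence a constant $c_i$. The orthogonality $\int_{\Omega_\epsilon}\varphi_{2,\epsilon}=0$ yields $c_1|\Omega_1|+c_2|\Omega_2|=0$, while the $L^2$-normalization yields $c_1^2|\Omega_1|+c_2^2|\Omega_2|=1$, so $c_1$ and $c_2$ are both nonzero and have opposite signs.

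Finally, interior elliptic regularity (Schauder or De Giorgi--Nash--Moser estimates applied on $K_i$, which lies at a positive distance from the connector's mouth) upgrades the weak $L^2$ convergence to uniform convergence on $K_i$. Since $\varphi_{2,\epsilon}|_{K_i}$ converges uniformly to the nonzero constant $c_i$, the eigenfunction is bounded away from zero on $K_i$ once $\epsilon$ is small enough, so $\NNN(\varphi_{2,\epsilon})\cap(K_1\cup K_2)=\emptyset$. The main obstacle is the mass-control argument in the connector: one needs a quantitative Poincar\'e inequality that tracks how much $L^2$ mass a function with small Rayleigh quotient can deposit inside the connector as a function of $\epsilon$. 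This is precisely the step where the narrowness hypothesis on the connector is used, and the quantitative version of this inequality controls how close to the connector's mouth the set $K_i$ is allowed to reach.
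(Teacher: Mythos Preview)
Your overall strategy---show that $\psi_{2,\epsilon}$ converges on each base domain to a nonzero constant of opposite sign, upgrade to uniform convergence, and conclude that the nodal set cannot enter the region where the limit is nonzero---is the same as the paper's. However, two points in your proposal fall short.

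First, your formalization with $K_i \Subset \Omega_i$ is strictly weaker than the paper's formal statement (Theorem~\ref{thm: main result 2}), which confines the nodal line to $D_{r_1}\cup Q_\epsilon\cup D_{r_2}$ for arbitrarily small half-disks $D_{r_i}$ at the connector mouths. By Pleijel (Proposition~\ref{prop: neumannn conj}) the nodal line must meet $\partial\Omega_\epsilon$ at two points, so excluding it only from compact subsets of the \emph{open} $\Omega_i$ does not rule out the nodal line running along, or arbitrarily close to, $\partial\Omega_i$ deep into a base domain. Your appeal to ``interior elliptic regularity'' on $K_i$ cannot control this; one needs uniform convergence \emph{up to the Neumann boundary} of $\Omega_i$ away from the connector mouth. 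The paper supplies exactly this missing ingredient: in Fermi coordinates near $\partial\Omega_i$ it extends $\psi_{2,\epsilon}$ by even reflection across the boundary (using $\partial_\eta\psi_{2,\epsilon}=0$), obtaining an elliptic equation on a doubled tubular neighborhood to which interior De~Giorgi--Nash--Moser estimates (Theorem~8.24 of \cite{GT}) apply. This reflection step is the main technical content of the lemma preceding Theorem~\ref{thm: main result 2}.

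Second, your Poincar\'e argument in the connector is misstated. With Neumann conditions on a tube $[-1,1]\times B_\epsilon$, the first nonzero eigenvalue is $O(1)$ (a longitudinal mode), not of order $\epsilon^{-2}$; so small Rayleigh quotient does not by itself forbid mass in the connector. The paper avoids rederiving any of this by citing the Neumann dumbbell convergence results of Gadyl'shin and Hale--Vegas directly (Theorem~\ref{thm: Gadyl'shin} and equation~(\ref{eq: coeff})), which already give $\|\psi_{2,\epsilon}\|_{L^2(Q_\epsilon)}\to 0$ together with $L^2$ convergence on $\Omega_i$ to the explicit constants $\alpha_i/\sqrt{|\Omega_i|}$ with $\alpha_1\alpha_2<0$.
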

\begin{figure}[ht]
\centering
\includegraphics[height=4cm]{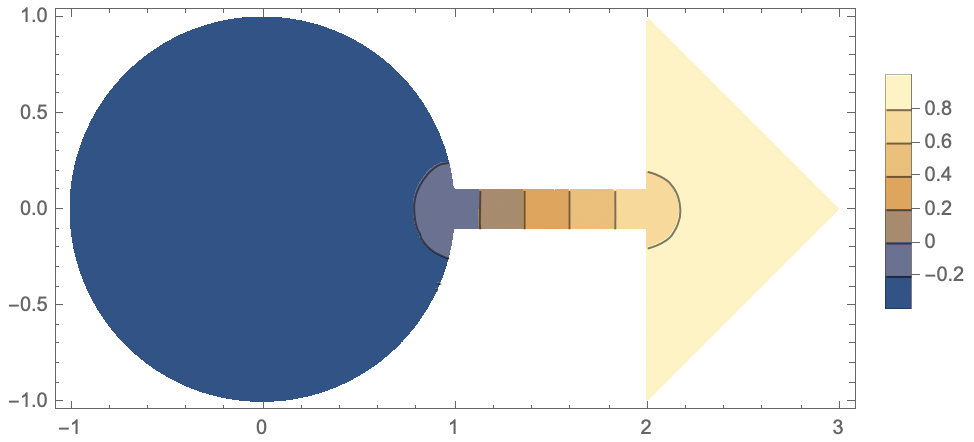}
\caption{First nodal set does not enter the ``deep blue" or ``pale yellow" region.}
\end{figure}
We present a formal statement of the aforementioned result as Theorem \ref{thm: main result 2} after the introduction of various notations in Section \ref{sec: Neumann dumbbells}. The above result also stands in contrast to the Dirichlet case studied in \cite{MS1}, where Mukherjee and the second named author proved that under certain size restrictions on the base domains, the first nodal set is located deep inside one of the base domains and away from the connectors.  It is important to note here that considering mirror couplings on symmetric dumbbells, Theorem 2.1 of \cite{AtarBurdzy2002} proves that the nodal line must intersect the connector. However, the result does not rule out the possibility that the nodal line might be a curve joining the farthest points of the dumbbell through the connector. In this sense, Theorem \ref{thm: main result 2} provides a relatively more precise location for the nodal line on a broader class of dumbbells. 

\section{Construction of the dumbbell and asymptotic localization}\label{sec: construction}

We will work with the one-parameter family of dumbbells as constructed in \cite{Ji} (also see \cite{MS1}). For the sake of completeness, we repeat the construction below.

Consider two bounded disjoint open sets $\Omega_1$ and $\Omega_2$ in $\RR^n, n\geq 2$ with smooth boundary 
such that a portion of the boundary is flat. More precisely, for some positive constant $\xi>0$,

\begin{equation*}
     \overline{\Omega}_1 \cap \{(x_1, x')\in \RR\times \RR^{n-1}: x_1\geq -1; |x'|<3\xi \}= \{(-1, x')\in \pa \Omega_1: |x'|<3\xi \},
\end{equation*}
and 
\begin{equation*}
    \overline{\Omega}_2 \cap \{(x_1, x')\in \RR\times \RR^{n-1}: x_1\leq 1; |x'|<3\xi \}= \{(1, x')\in \pa \Omega_2: |x'|<3\xi \}. 
\end{equation*}
We refer to these domains as the base domains of the dumbbell. Let $Q$ be a line segment joining the flat segments (as described above) of $\pa\Omega_1$ and $\pa\Omega_2$. For some small enough fixed $\epsilon>0$, consider the dumbbell domain $\Omega_\epsilon$  obtained by joining $\Omega_1$ and $\Omega_2$ with a connector $Q_\epsilon$ denoted by
$$\Omega_\epsilon:= \Omega_1\cup \Omega_2 \cup Q_\epsilon.$$
Here
$$Q_\epsilon= Q_1(\epsilon)\cup L(\epsilon)\cup Q_2(\epsilon)$$
is given by
\begin{align*}
     Q_1(\epsilon) &= \left\{(x_1, x')\in \RR \times \RR^{n-1}:  -1 \leq x_1 \leq -1+2\epsilon; |x'|<\epsilon \rho\left(\frac{-1-x_1}{\epsilon} \right) \right\},\\
    Q_2(\epsilon)  &= \left\{(x_1, x')\in \RR \times \RR^{n-1}:  1-2\epsilon \leq x_1 \leq 1; |x'|<\epsilon \rho\left(\frac{x_1-1}{\epsilon} \right) \right\},\\
     L(\epsilon) &=  \left\{(x_1, x')\in \RR \times \RR^{n-1}:  -1+2\epsilon \leq x_1 \leq 1-2\epsilon ; |x'|<\epsilon  \right\},
\end{align*}
where $\rho\in C^\infty((-2, 0))\cap C^0((-2, 0])$ is a positive bump function satisfying
$\rho(0)=2$ and $\rho(q)= 1$ for  $q\in(-2, -1)$.
 \begin{figure}[ht]
\centering
\includegraphics[height=4cm]{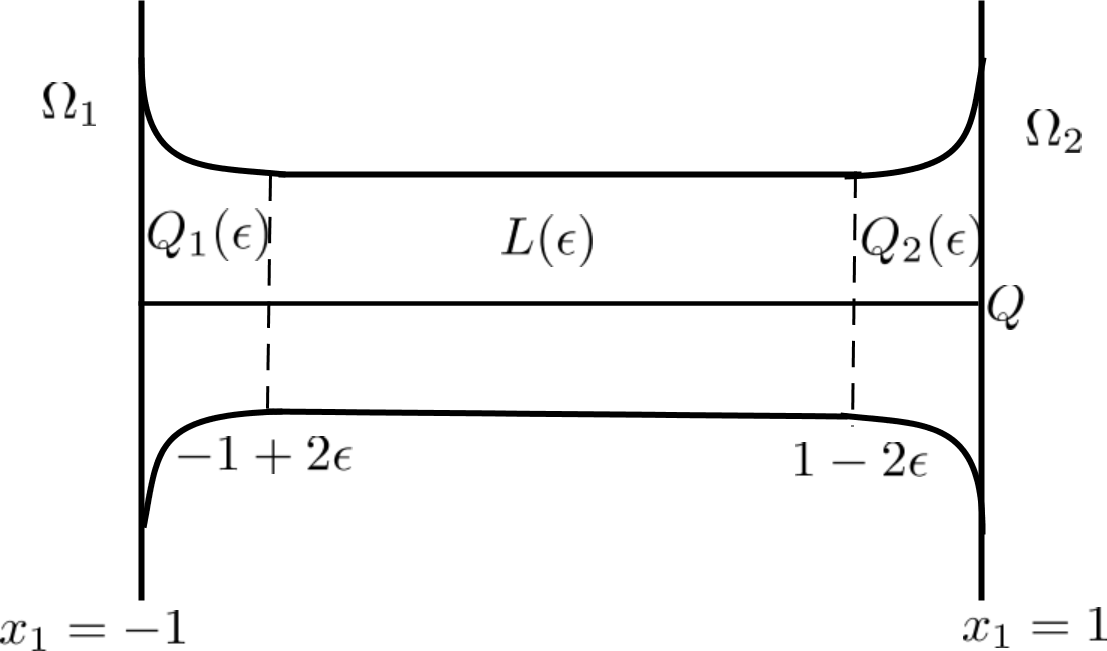}
\caption{Construction of the connector $Q_\epsilon$}
\end{figure}

\subsection{Localization  on Dirichlet Dumbbells}\label{subsec: Dir loc}
Considering the Dirichlet boundary condition on $\Omega_\epsilon$, as pointed out in Chapter 7 of \cite{GN} (see also Section 7 of \cite{Daners} and Chapter 2 of \cite{He}), we have that
$$\lambda_k(\Omega_\epsilon)\to \lambda_k(\Omega_1\cup \Omega_2) \quad \text{ as } \epsilon\to 0.$$
Here, $\lambda_k(\Omega_1\cup \Omega_2)$ denotes the $k$-th element after rearranging the Dirichlet eigenvalues of $\Omega_1$ and $\Omega_2$ non-decreasingly.  Let $\Lambda_i$  denote the spectrum of $\Omega_i$ ($i=1, 2$), and $\varphi_k^{\Omega_i}$ denote the $k$-th $L^2$-normalized Dirichlet eigenfunction of $\Omega_i$ corresponding to the eigenvalue $\lambda_k^{\Omega_i}$. If $\Lambda_1\cap \Lambda_2= \emptyset$ then each $L^2$-normalized eigenfunction $\varphi_{k,\epsilon}$ of the domain $\Omega_\epsilon$ is fully localized in one of the two base domain $\Omega_i$ as $\epsilon\to 0$.  In other words, for every $k\geq 1$, there exists $i\in\{1, 2\}$  such that given any $\delta\in(0,1)$ one can find $\epsilon_0>0$ for which 
$$\|\varphi_{k, \epsilon}\|_{L^2(\Omega_i)}>(1-\delta)~~~\text{ for any  } \epsilon<\epsilon_0.$$
In fact, $\varphi_{k,\epsilon}$ approaches in $L^2$-norm to an eigenfunction $\varphi_{k,0} = \varphi_{k'}^{\Omega_i}$ (for some $i=1,2$ and $k'\leq k$) of the limiting domain which is fully localized in $\Omega_i$ and zero in the other. 

Heuristically, low-energy Dirichlet eigenfunctions cannot tunnel effectively through narrow (sub-wavelength) openings of a domain which leads to their localization in relatively thick parts of the domains. The fact that the spectra of $\Omega_1$ and $\Omega_2$ do not intersect is important for the localization of the eigenfunctions to exactly one subdomain $\Omega_i$.

\subsection{Localization on Neumann Dumbbells}\label{subsec: Neu loc}

The situation is a bit more complicated in the Neumann case. Unlike the Dirichlet case, we cannot assume the Neumann spectra of  $\Omega_1$ and $\Omega_2$ to be disjoint because both spectra contain 0 as their eigenvalue. Since we are interested in studying the second eigenfunction of the dumbbells (as we will see below), this distinction from the Dirichlet case is crucial. Moreover, contrary to their Dirichlet counterpart, the  Neumann eigenvalues and eigenfunctions of the dumbbells may converge to the Dirichlet spectrum of the connecting line segment $Q$ as well. All of these complexities contribute to destroying the asymptotic localization in certain cases.

Let $\psi_k^{\Omega_i}$ ($i=1, 2$) denote the $L^2$-normalized eigenfunction  corresponding to the $k$-th Neumann eigenvalue $\mu_k^{\Omega_i}$  of the base domains $\Omega_i$ ($i=1, 2$). Denote $\psi_{k,\epsilon}$ to be the $L^2$-normalized eigenfunction corresponding to the $k$-th Neumann eigenvalue $\mu_{k,\epsilon}$ of $\Omega_\epsilon$. We assume that $\eta_k$ is the Dirichlet $k$-th eigenvalue of the line segment $Q$.  Defining $\Sigma_i:=\{\mu_l^{\Omega_i}\}$ and $\Lambda_Q:=\{\eta_l\}$, for any eigenvalue $\mu_{k,\epsilon}$,  we have (see \cite{Ji, Ar}) 
\begin{equation}
    \mu_{k,\epsilon}\to \mu \in \Sigma_1 \cup \Sigma_2 \cup \Lambda_Q~~~ \text{as}~~~ \epsilon\to 0.
\end{equation}

We now restate Theorem 1.1 of \cite{Gad}, which deals with the localization of any eigenfunction $\psi_{k,\epsilon}$. We define the multiplicity of $\mu$ as the triple $(n_1,n_2,n_3)$, where $n_1$ and $n_2$ are the multiplicities of $\mu$ in $\Sigma_1$ and $\Sigma_2$ respectively, and $n_3$  is the multiplicity in $\Lambda_Q$. For the ease of restatement and our purpose, we assume $\Omega_1$ and $\Omega_2$ have simple spectra, i.e., $n_1, n_2, $ can be at most 1. The original statement in \cite{Gad} relaxes the simplicity assumption on the spectrum of $\Omega_1$ and $\Omega_2$ and presents a more general description of the following theorem. 

\begin{theorem}[Gadyl'shin]\label{thm: Gadyl'shin}
Assuming that the multiplicity of $\mu$ is $(1,1,0)$, let $\psi_1$ and $\psi_2$ denote the Neumann eigenfunction corresponding to $\mu\in \Sigma_1$ and $\Sigma_2$ respectively. 
Then we have the following convergence:
\begin{align}\label{eq: Neumann loc}
     \left\|\psi_{k,\epsilon}-\alpha_1\psi_1\right\|_{L^2(\Omega_1)} + \left\|\psi_{k,\epsilon}-\alpha_2\psi_2\right\|_{L^2(\Omega_2)} 
     + \|\psi_{k,\epsilon}\|_{L^2(Q_{\epsilon})} \to 0\;\text{as} \;\epsilon\to 0,
\end{align}
where $\alpha_1^2+\alpha_2^2=1$. Moreover, $\psi_{k,\epsilon}$ $L^2$-localizes on $\Omega_1$ whenever $n_2=n_3=0$. More precisely, 
$$\left\|\psi_{k,\epsilon}-\psi_1\right\|_{L^2(\Omega_1)}+\left\|\psi_{k,\epsilon}\right\|_{L^2(\Omega_2)} + \left\|\psi_{k,\epsilon}\right\|_{L^2(Q_\epsilon)}\to 0.$$

\end{theorem}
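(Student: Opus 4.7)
The plan is to proceed by compactness, extracting weak limits of $\psi_{k,\epsilon}$ on the two fixed base domains and ruling out mass concentration on the thin connector; the two coefficients $\alpha_1, \alpha_2$ then emerge from the $L^2$-normalization. Since $\|\nabla\psi_{k,\epsilon}\|_{L^2(\Omega_\epsilon)}^2 = \mu_{k,\epsilon} \to \mu$ and $\|\psi_{k,\epsilon}\|_{L^2(\Omega_\epsilon)} = 1$, the sequence of restrictions $\psi_{k,\epsilon}|_{\Omega_i}$ is uniformly bounded in $H^1(\Omega_i)$ for $i=1,2$, so by Rellich--Kondrachov and a diagonal extraction one obtains a subsequence with $\psi_{k,\epsilon}|_{\Omega_i} \to u_i$ strongly in $L^2(\Omega_i)$ and weakly in $H^1(\Omega_i)$. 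For any test function $\phi \in C^\infty(\overline{\Omega_i})$ that vanishes in a neighborhood of the limiting junction point $(\mp 1, 0)$ on $\partial\Omega_i$, one has $\phi \equiv 0$ on $\overline{\Omega_i} \cap \overline{Q_\epsilon}$ for small $\epsilon$, so extending $\phi$ by zero yields a valid Neumann test function on $\Omega_\epsilon$; passing to the weak-form limit shows that $u_i$ solves $-\Delta u_i = \mu u_i$ on $\Omega_i$ with Neumann data on $\partial\Omega_i$ away from a single point. A capacity-based removable-singularity argument (the excluded set has zero $H^1$-capacity in dimension $n \geq 2$) then upgrades $u_i$ to a genuine Neumann eigenfunction; by the assumed simplicity of $\mu$ in $\Sigma_i$, this forces $u_i = \alpha_i \psi_i$ when $\mu \in \Sigma_i$ and $u_i \equiv 0$ otherwise.

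The main obstacle is showing $\|\psi_{k,\epsilon}\|_{L^2(Q_\epsilon)} \to 0$ under the hypothesis $n_3 = 0$. The strategy is to rescale transversely by a factor $1/\epsilon$, unfolding $Q_\epsilon$ to a cylinder of essentially unit cross-section, and to study the resulting limit problem on the line segment $Q$. The key observation is that this limit problem inherits Dirichlet endpoint data at $(\pm 1, 0)$: the matching with the finite $H^1$ limits $u_i$ on the base domains, combined with the $O(\epsilon)$ scale of the junctions, forces any longitudinal trace of concentrating mass to vanish at the ends. The Dirichlet spectrum of $Q$ is precisely $\Lambda_Q$, so if $\|\psi_{k,\epsilon}\|_{L^2(Q_\epsilon)}$ did not tend to zero, the rescaled restrictions would converge (along a further subsequence) to a nontrivial Dirichlet eigenfunction of $Q$ with eigenvalue $\mu$, contradicting $n_3 = 0$. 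Making this rigorous requires a uniformly bounded $H^1$-extension operator on $Q_\epsilon$ together with slicewise Poincar\'e inequalities to suppress transverse oscillations, whose Rayleigh quotient is of order $1/\epsilon^2$ and therefore incompatible with the bounded-energy hypothesis $\mu_{k,\epsilon} \to \mu < \infty$.

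Granted the vanishing on the connector, the normalization identity
\begin{equation*}
1 = \|\psi_{k,\epsilon}\|_{L^2(\Omega_1)}^2 + \|\psi_{k,\epsilon}\|_{L^2(\Omega_2)}^2 + \|\psi_{k,\epsilon}\|_{L^2(Q_\epsilon)}^2
\end{equation*}
passes to the limit and yields $\alpha_1^2 + \alpha_2^2 = 1$, establishing the first assertion. For the second assertion, when $n_2 = n_3 = 0$ the identification step forces $u_2 \equiv 0$ (as $\mu \notin \Sigma_2$), hence $\alpha_2 = 0$ and $|\alpha_1| = 1$; a sign choice for $\psi_1$ then gives the claimed full $L^2$-localization on $\Omega_1$. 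Finally, since the argument produces the same limit for every subsequence, the convergence in fact holds for the whole family $\epsilon \to 0$.
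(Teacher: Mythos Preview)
The paper does not supply a proof of this theorem: it is quoted as Theorem~1.1 of \cite{Gad} and used as a black box, so there is no argument in the paper to compare against. What you have written is therefore not a comparison target but an independent sketch of Gadyl'shin's result.

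As a sketch, your outline follows the standard route for thin-handle problems (compactness on the fixed lobes, removable-singularity upgrade at the junction point, rescaling on the tube to identify a 1D Dirichlet limit). The base-domain part is essentially complete: the capacity argument correctly removes the single junction point in dimension $n\ge 2$. The connector part is the right idea but is only heuristic as written. Two places need real work. First, the claim that the rescaled longitudinal profile inherits \emph{Dirichlet} data at $\pm 1$ is exactly the delicate point; it hinges on quantifying that the (rescaled) trace at the opening is $O(\epsilon^{(n-1)/2})$ times a quantity controlled by the $H^1$-bound on $\Omega_i$, and this requires a careful trace/matching estimate across the junction regions $Q_1(\epsilon), Q_2(\epsilon)$ rather than the informal ``matching with finite $H^1$ limits'' you invoke. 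Second, your closing sentence---that every subsequence yields the same limit so the full family converges---is not justified in the $(1,1,0)$ case: your argument only pins down $\alpha_1^2+\alpha_2^2=1$, so different subsequences could in principle pick out different points on that circle. In Gadyl'shin's formulation the coefficients are allowed to depend on $\epsilon$ (and there are two eigenfunctions converging to $\mu$, whose limits together span the two-dimensional limit space), which is how this is resolved; the paper's simplified restatement suppresses that dependence. In the $(1,0,0)$ case your uniqueness argument is fine up to the sign of $\psi_1$.
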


Since we are interested in the second eigenfunction, we note that the second Neumann eigenvalue $\mu_{2,\epsilon}\searrow 0$ as $\epsilon\to 0$.  In terms of the notation described above, the multiplicity of $0$ (without any assumption on the simplicity of the spectra of $\Omega_1, \Omega_2$) is $(1,1,0)$. Moreover, since $\mu_{1, \epsilon}=0$ (for every $\epsilon$) and $\mu_{3, \epsilon}$ converges to a non-zero value, it is easy to see that for sufficiently small $\epsilon$, the second eigenvalue $\mu_{2,\epsilon}$ is simple. From Theorem \ref{thm: Gadyl'shin}, we have that the $L^2$-mass of any eigenfunction corresponding to $\mu_{2,\epsilon}$ gets distributed across the dumbbell in the limit, 
i.e., $\psi_{2,\epsilon}$ satisfies  (\ref{eq: Neumann loc}). 
 In the particular case of the second eigenfunctions, from Proposition 7.3  of \cite{Ve} (and Proposition 3.6 of \cite{HaVe}), each coefficient in (\ref{eq: Neumann loc}) can be explicitly expressed as 
\begin{equation}\label{eq: coeff}
    \alpha_1 = -\left(\frac{|\Omega_2|}{|\Omega_1| + |\Omega_2|}\right)^{1/2};~~~~~~~~~ \alpha_2 = \left(\frac{|\Omega_1|}{|\Omega_1| + |\Omega_2|}\right)^{1/2}.
\end{equation}

\section{Location of hot spots in a dumbbell with Dirichlet boundary} \label{sec: obstactles}

Consider the one-parameter family of dumbbells $\Omega_\epsilon\subset \RR^n$ as described in Section \ref{sec: construction} with Dirichlet boundary. Fix $\epsilon_0>0$.  
Assume that $\lambda_1^{\Omega_1}\neq \lambda_1^{\Omega_2}$, and without loss of generality, let $\lambda_1^{\Omega_1}< \lambda_1^{\Omega_2}$ with $\varphi_{1,\epsilon}, \varphi_1^{\Omega_1}>0$.  Consider a sequence $\{\epsilon_i\}\searrow 0$ and redefine $\varphi_{1, \epsilon_i}, \varphi_1^{\Omega_1}$ on $\RR^n$ as
\begin{equation*}
    \varphi_{1, \epsilon_i} = \begin{cases}
        & \varphi_{1, \epsilon_i}  \quad \text{ on } \Omega_{\epsilon_i}, \\
        & 0, \qquad \text{otherwise}.
    \end{cases}
    \quad \text{and} \quad 
    \varphi_{1,0} = \begin{cases}
        & \varphi_1^{\Omega_1} \quad \text{ on } \Omega_1,\\
        & 0, \qquad \text{ otherwise}.
    \end{cases}
\end{equation*}
Then,  from the localization results in Subsection \ref{subsec: Dir loc},  we have that $\varphi_{1,\epsilon}$ localizes on $\Omega_1$ and
$$\|\varphi_{1,\epsilon_i}-\varphi_{1,0}\|_{L^2(\RR^n)}\to 0 ~~~\text{as}~~~ \epsilon_i\to 0.$$

\begin{lemma}\label{lem: Dir sup nor conv}
    $\varphi_{1,\epsilon_i}\to \varphi_{1,0}$ as $\epsilon_i\to 0$ in $L^\infty(\Omega_1)$.
\end{lemma}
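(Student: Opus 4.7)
The plan is to upgrade the $L^2$-convergence $\varphi_{1,\epsilon_i}\to\varphi_{1,0}$ on $\Omega_1$ (established at the end of Subsection~\ref{subsec: Dir loc}) to $L^\infty$-convergence, by combining uniform elliptic regularity away from the connector opening with a localized $L^\infty$--$L^2$ estimate in a small neighborhood of it. Denote the opening point by $y_0 := (-1, 0)\in\partial\Omega_1$. Since the eigenvalues $\lambda_{1,\epsilon_i}$ are uniformly bounded (they converge to $\lambda_1^{\Omega_1}$) and the dumbbells $\Omega_{\epsilon_i}$ sit inside a common bounded set, standard Moser iteration applied to $-\Delta\varphi_{1,\epsilon_i}=\lambda_{1,\epsilon_i}\varphi_{1,\epsilon_i}$ yields a uniform $L^\infty$ bound $\|\varphi_{1,\epsilon_i}\|_{L^\infty(\Omega_{\epsilon_i})}\leq M$.

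Fix $\delta>0$. By continuity of $\varphi_{1,0}$ and the fact that $\varphi_{1,0}(y_0)=0$, choose $\rho>0$ so small that $|\varphi_{1,0}|<\delta/4$ on $B_{2\rho}(y_0)\cap\Omega_1$. Split $\Omega_1$ into the bulk $K:=\overline{\Omega_1}\setminus B_\rho(y_0)$ and the neck region $B_\rho(y_0)\cap\Omega_1$. On $K$: for $\epsilon_i<\rho/2$ the portion of $\partial\Omega_{\epsilon_i}$ adjacent to $K$ coincides with the smooth part of $\partial\Omega_1$ on which $\varphi_{1,\epsilon_i}$ vanishes; interior and boundary Schauder/$W^{2,p}$ estimates applied to the bounded $\lambda_{1,\epsilon_i}$-equation give uniform $C^{1,\alpha}(K)$ bounds, and together with the $L^2$-convergence and Arzelà--Ascoli we conclude $\varphi_{1,\epsilon_i}\to\varphi_{1,0}$ uniformly on $K$. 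On the neck, I would apply a localized $L^\infty$--$L^2$ estimate on the slightly larger set $B_{2\rho}(y_0)\cap\Omega_{\epsilon_i}$, using that $\varphi_{1,\epsilon_i}$ vanishes on the Dirichlet part of $\partial\Omega_{\epsilon_i}$ inside $B_{2\rho}$:
\[
\|\varphi_{1,\epsilon_i}\|_{L^\infty(B_\rho(y_0)\cap\Omega_{\epsilon_i})}\leq C\,\|\varphi_{1,\epsilon_i}\|_{L^2(B_{2\rho}(y_0)\cap\Omega_{\epsilon_i})}.
\]
The right-hand side is bounded above by $\|\varphi_{1,0}\|_{L^2(B_{2\rho}\cap\Omega_1)}+\|\varphi_{1,\epsilon_i}-\varphi_{1,0}\|_{L^2(\Omega_1)}+\|\varphi_{1,\epsilon_i}\|_{L^2(Q_{\epsilon_i})}$: the first term is small for small $\rho$ by continuity of $\varphi_{1,0}$ at $y_0$, the second goes to $0$ by the $L^2$-convergence, and the third goes to $0$ by Dirichlet localization on the connector. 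Hence, after first choosing $\rho$ small and then $\epsilon_i$ small, the neck region contributes at most $\delta/2$ to $\|\varphi_{1,\epsilon_i}\|_{L^\infty}$, and together with the bound $|\varphi_{1,0}|<\delta/4$ there, it contributes at most $3\delta/4$ to $\|\varphi_{1,\epsilon_i}-\varphi_{1,0}\|_{L^\infty(B_\rho\cap\Omega_1)}$. Combining with the bulk estimate on $K$ yields $\|\varphi_{1,\epsilon_i}-\varphi_{1,0}\|_{L^\infty(\Omega_1)}<\delta$ for all sufficiently small $\epsilon_i$.

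The main obstacle is ensuring that the constant $C$ in the $L^\infty$--$L^2$ estimate can be chosen independently of $\epsilon_i$. The Dirichlet boundary $\partial\Omega_{\epsilon_i}\cap B_{2\rho}(y_0)$ does depend on $\epsilon_i$: it consists of the flat segment $\partial\Omega_1\cap B_{2\rho}$ with the opening of radius $2\epsilon_i$ excised, together with the lateral walls of the connector piece $Q_1(\epsilon_i)$ that lie inside $B_{2\rho}$. However, the explicit construction in Section~\ref{sec: construction}, governed by the fixed profile function $\rho$, produces a uniformly Lipschitz family of boundaries with uniform relative isoperimetric/Sobolev constants, so the Moser or De Giorgi--Nash constant can indeed be taken independent of $\epsilon_i$.
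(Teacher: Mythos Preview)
Your argument is correct and follows a genuinely different route from the paper. The paper does not split into bulk and neck at all: it writes a single elliptic equation for the difference $\varphi_{1,\epsilon_i}-\varphi_{1,0}$ on the whole dumbbell $\Omega_{\epsilon_i}$ (with $\varphi_{1,0}$ extended by zero outside $\Omega_1$), observes that both functions vanish on $\partial\Omega_{\epsilon_i}$, and applies the global $L^\infty$--$L^2$ bound of \cite[Theorem~8.15]{GT} directly; the constant there depends only on $n$, the eigenvalue, and $|\Omega_{\epsilon_i}|$, all uniformly bounded. This is considerably shorter---no Schauder estimates, no Arzel\`a--Ascoli, no local estimate near the opening. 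Your decomposition, by contrast, never needs to treat the zero extension of $\varphi_{1,0}$ as a weak solution across the opening into $Q_{\epsilon_i}$, which is a mild delicacy in the paper's one-line approach; in that sense your route is slightly more robust, at the price of more bookkeeping.

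Two small remarks on your neck step. First, the uniform-in-$\epsilon_i$ constant in the local $L^\infty$--$L^2$ estimate is obtained most cleanly not via uniform Lipschitz geometry of $\partial\Omega_{\epsilon_i}$ but by noting that the zero extension of the positive function $\varphi_{1,\epsilon_i}$ to $\RR^n$ is a weak subsolution of $(-\Delta-\lambda_{1,\epsilon_i})u\leq 0$ (the boundary term $\int_{\partial\Omega_{\epsilon_i}}\phi\,\partial_\eta\varphi_{1,\epsilon_i}$ has a sign since $\partial_\eta\varphi_{1,\epsilon_i}\leq 0$), so one may invoke the \emph{interior} local maximum principle on $B_{2\rho}(y_0)$, with constant depending only on $n$, $\rho$ and a bound for $\lambda_{1,\epsilon_i}$. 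Second, that constant scales like $\rho^{-n/2}$, while $\|\varphi_{1,0}\|_{L^2(B_{2\rho}\cap\Omega_1)}\lesssim \rho^{n/2}\sup_{B_{2\rho}\cap\Omega_1}|\varphi_{1,0}|$, so their product is $\lesssim \sup_{B_{2\rho}\cap\Omega_1}|\varphi_{1,0}|$; you should choose the threshold for $|\varphi_{1,0}|$ on $B_{2\rho}$ (your ``$\delta/4$'') to absorb this universal multiplicative constant.
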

\begin{proof}
    Consider the equations
$$(\Delta+\lambda_1^{\Omega_1})[\varphi_{1,\epsilon_i}-\varphi_{1,0}]=(\lambda_1^{\Omega_1}-\lambda_{1,\epsilon_i})\varphi_{1,\epsilon_i},$$
and 
$$(\Delta+\lambda_{1,\epsilon_i})\varphi_{1,\epsilon_i}=0$$
on $\Omega_{\epsilon_i}$. Note that $\varphi_{1,0}= 0$ and $\varphi_{1,\epsilon_i}=0$ on $\pa \Omega_{\epsilon_i}$. Also, using domain monotonicity, we have $\lambda_1^{\Omega_1}>\lambda_{1, \epsilon}$ for every $\epsilon\leq \epsilon_0$. Then, applying Theorem 8.15 of \cite{GT} in the above two equations consecutively, 
we have that for some $q>2$ and $\nu\geq\sqrt{\lambda_1^{\Omega_1}}$, 
\begin{align*}
    \|\varphi_{1,\epsilon_i}-\varphi_{1,0}\|_{L^\infty(\Omega_1)}  &\leq \|\varphi_{1,\epsilon_i}-\varphi_{1,0}\|_{L^\infty(\Omega_{\epsilon_i})}\\
    &\leq  C\left(\|\varphi_{1,\epsilon_i}-\varphi_{1,0}\|_{L^2(\Omega_{\epsilon_i})}+ \|(\lambda_{1,0}-\lambda_{1,\epsilon_i})\varphi_{1,\epsilon_i} \|_{L^{q/2}(\Omega_{\epsilon_i})}\right)\\
    &\leq  C\left(\|\varphi_{1,\epsilon_i}-\varphi_{1,0}\|_{L^2(\Omega_{\epsilon_i})}+ C^*|(\lambda_{1,0}-\lambda_{1,\epsilon_i})|\cdot\|\varphi_{1,\epsilon_i} \|_{L^{\infty}(\Omega_{\epsilon_i})}\right)\\
    &\leq  C\left(\|\varphi_{1,\epsilon_i}-\varphi_{1,0}\|_{L^2(\Omega_{\epsilon_i})}+ C'|\lambda_{1,0}-\lambda_{1,\epsilon_i}|\cdot\|\varphi_{1,\epsilon_i} \|_{L^{2}(\Omega_{\epsilon_i})}\right)\\
     &\leq  C\left(\|\varphi_{1,\epsilon_i}-\varphi_{1,0}\|_{L^2(\RR^n)}+ C'|\lambda_{1,0}-\lambda_{1,\epsilon_i}|\cdot\|\varphi_{1,\epsilon_i} \|_{L^{2}(\RR^n)}\right),
\end{align*}
where $C, C'$ depends on $n, q, \nu,$ and $|\Omega_{\epsilon_i}|$. For each $i$, $|\Omega_{\epsilon_i}|$ is uniformly bounded which implies that the constants on the right are independent of $i$. Now using $\lambda_{1,\epsilon_i} \to \lambda_{1,0}$ and $\|\varphi_{1,\epsilon_i}-\varphi_{1,0}\|_{L^2(\RR^n)}\to 0$ we have 
\begin{equation*}\label{eq: sup_norm_conv}
    \|\varphi_{1,\epsilon_i}-\varphi_{1,0}\|_{L^\infty(\Omega_1)}  \to 0 ~~~\text{as}~~~ i\to \infty.
\end{equation*}

\end{proof}

\begin{remark}\label{rem: 3.2}
    Using a similar computation with some routine modifications, it is easy to see that $\varphi_{1,\epsilon_i}\to 0$ in $C^0(\Omega_2)$ as $\epsilon_i\to 0$. 
\end{remark}

For the remainder of the article, we denote the supremum level sets as
$$\displaystyle \LLL(\Omega_1):= \{x\in \Omega_1: \varphi_{1,0}(x)=\|\varphi_{1,0}\|_{L^\infty(\Omega_1)}\},$$ and 
$$\LLL(\Omega_{\epsilon_i}):= \{x\in \Omega_{\epsilon_i}: \varphi_{1,\epsilon_i}(x)=\|\varphi_{1,\epsilon_i}\|_{L^\infty(\Omega_{\epsilon_i})}\}.$$

\begin{remark}
    The maximum of the ground state Dirichlet eigenfunction is always attained in the interior of the domain $\Omega$. So, there are two options: $\mathcal{L}(\Omega)$ can either be a collection of closed curves or a collection of isolated points. Because of the maximum principle, the level set $\mathcal{L}(\Omega)$ cannot be a closed curve. 
\end{remark}

\begin{proof}[Proof of Theorem \ref{thm: main result}]
Consider a sequence of points $x_i\in \LLL(\Omega_{\epsilon_i})$. First, we prove that if $x_0$ is a limit point of $\{x_i\}$, then $x_0\in \LLL(\Omega_1)$.

We begin with the claim that for sufficiently small $\epsilon_i$, $x_i$ is eventually contained in $\Omega_1$. From Remark \ref{rem: 3.2}, given any $\delta>0$, there exists $\epsilon_0>0$ such that $\displaystyle\sup_{x\in \Omega_2} \varphi_{1,\epsilon_i}(x)<\delta \text{ for every } \epsilon_i<\epsilon'.$
If possible, let there exist a subsequence $\{x_j\}\subset \{x_i\}$ such that $x_j\in \Omega_2$ for all $j$. Then 
\begin{equation}\label{eq: Lemma 3.4 eq 1}
    \sup_{x\in \Omega_1} \varphi_{1,\epsilon_j}(x)\leq  \sup_{x\in \Omega_2} \varphi_{1,\epsilon_j}(x)<\delta \text{ for every } \epsilon_j<\epsilon'.
\end{equation}
Now, using Lemma \ref{lem: Dir sup nor conv}, there exists $\epsilon''>0$ such that 
\begin{equation}\label{eq: Lemma 3.4 eq 2}
    \|\varphi_1^{\Omega_1}\|_{\infty}-\delta <\sup_{x\in \Omega_1} \varphi_{1,\epsilon_i}(x) \text{ for every } \epsilon_i<\epsilon''.
\end{equation}
Clearly, choosing $\delta<< \|\varphi_1^{\Omega_1}\|_{\infty}/2$, for any $\epsilon_j<\min\{\epsilon', \epsilon''\}$, (\ref{eq: Lemma 3.4 eq 1}) contradicts $(\ref{eq: Lemma 3.4 eq 2})$. This gives us that for sufficiently small $\epsilon_i$, $x_i\notin \Omega_2$. 

The maximum points $x_i\in \LLL(\Omega_{\epsilon_i})$ cannot lie inside the connector for sufficiently small $\epsilon_i$. In general, if $X$ denotes a max-point of the ground state eigenfunction of a given domain, then the probability that a Brownian motion starting at $X$ escapes the domain within time $t$ can be bounded from below and above, which in turn implies that any max-point must have a (relatively) large ball around it embedded inside the domain. This was formally proved by Georgiev and Mukherjee in \cite{GM2} and as a consequence, it was shown that the radius of such a ball is comparable to  $1/\lambda_1^{\alpha(n)}$, where $\alpha(n)=\frac{1}{4}(n-1)+\frac{1}{2n}$ and $\lambda_1$ denotes the first eigenvalue of the domain. Recently, this inner radius estimate was further improved to  $1/\sqrt{\lambda_{1} (\log \lambda_{1})^{n-2}}$ by Charron and Mangoubi in \cite{CM}. 

Using these estimates, for any $x_i\in \LLL(\Omega_{\epsilon_i})$ there exists a ball of radius comparable to  $1/\sqrt{\lambda_{1,\epsilon_i} (\log \lambda_{1,\epsilon_i})^{n-2}}$ centred at $x_i$ completely contained inside $\Omega_{\epsilon_i}$. Since $\lambda_{1,\epsilon}\to \lambda_1^{\Omega_1}$, $\lambda_{1, \epsilon}$ is bounded from below for sufficiently narrow connector, which in turn implies that $x_i\notin Q_{\epsilon_i}$ for sufficiently small $\epsilon_i$. This proves that $x_i$ is eventually contained in $\Omega_1$ for sufficiently large $i$.

Recall that given $\{x_i\}\to x_0$, we want to show that $\displaystyle \varphi_{1,0}(x_0)=\|\varphi_{1,0}\|_{L^\infty(\Omega_1)}$. Observe that
\begin{align}
 \nonumber   |\varphi_{1,\epsilon_i}(x_i)-\varphi_{1,0}(x_0)|&= |\varphi_{1,\epsilon_i}(x_i)- \varphi_{1,\epsilon_i}(x_0)+\varphi_{1,\epsilon_i}(x_0) - \varphi_{1,0}(x_0)|\\
 \nonumber &\leq |\varphi_{1,\epsilon_i}(x_i)- \varphi_{1,\epsilon_i}(x_0)| + |\varphi_{1,\epsilon_i}(x_0) - \varphi_{1,0}(x_0)|.
 \end{align}
Since $\{x_i\}\to x_0$, we have that $\varphi_{1,\epsilon_i}(x_i)\to \varphi_{1,\epsilon_i}(x_0)$. Moreover, using (\ref{eq: sup_norm_conv}), we have
\begin{equation}
    |\varphi_{1,\epsilon_i}(x_0) - \varphi_{1,0}(x_0)|\leq \|\varphi_{1,\epsilon_i}-\varphi_{1,0}\|_{L^\infty(\Omega_1)}\to 0 \quad \text{as} \quad i\to \infty,
\end{equation}
which gives us
\begin{equation}
    |\varphi_{1,\epsilon_i}(x_i)-\varphi_{1,0}(x_0)|\to 0 \quad \text{as} \quad i\to \infty.
\end{equation}
Since $\varphi_{1,\epsilon_i}(x_i)=\|\varphi_{1, \epsilon_i}\|_{L^\infty(\Omega_1)}\to \|\varphi_{1, 0}\|_{L^\infty(\Omega_1)}$, we have  $x_0\in \LLL(\Omega_1)$.

Note that so far 
we have not imposed any kind of topological restrictions on the base domains of the dumbbells. 
Also, recall that in the hypothesis of Theorem \ref{thm: main result}, we have assumed  $\Omega_1$ to be convex and that $\lambda_1(\Omega_1)<\lambda_1(\Omega_2)$ to ensure the localization on $\Omega_1$. Since $\Omega_1$ is convex, we have that the ground state eigenfunction has a unique maximum point. This follows from the fact that the ground state eigenfunction is log-concave \cite{BL}. Let $x_0$ denote the max point of the ground state eigenfunction of $\Omega_1$. Also, let $B(x_0, \delta)$ denote a ball of radius $\delta$ centred at $x_0$.

Finally, we show that given any $\delta>0$, 
there exists $\epsilon'>0$ such that 
$$\LLL(\Omega_\epsilon) \subset B(x_0, \delta)~~~ \text{for any}~~~ \epsilon\leq \epsilon'.$$ 
If not, then for some $\delta>0$, there exist a sequence $\{x_j\}\subset \Omega_1$ such that $x_j\in \LLL(\Omega_{\epsilon_j})$ for each $i$, and $x_i\notin B(x_0, \delta)$. Then there is a subsequence $\{x_k\}\subset \{x_j\}$ such that $x_k\to x'$ for some $x'\in \Omega_1\setminus B(x_0, \delta)$. 
From the above argument, we have that $x'\in \LLL(\Omega_1)$. Since $\Omega_1$ has exactly one point of maximum, this gives a contradiction. 
\end{proof}

Now, let's briefly shift our focus to a result by Georgiev and Mukherjee from \cite{GM} which discusses the position of an obstacle inside a domain that satisfies a certain asymmetry condition (defined below).

\begin{defi}[$\alpha-$asymmetry]
    A bounded domain $\Omega \subset \RR^n$ is said to be $\alpha-$asymmetric if for all $x \in \partial \Omega$, and all $r > 0$,
$$\frac{\Vol(B(x,r)\setminus \Omega_{\epsilon})}{\Vol(B(x,r))} \geq \alpha.$$
\end{defi}
The above notion of asymmetry was introduced in \cite{Hay} and basically rules out ``sharp spikes'' (spikes with relatively small volume) entering deep in $\Omega$. Note that convex domains trivially satisfy the above asymmetry condition with coefficient $\alpha=\frac{1}{2}$. 

\begin{theorem}\cite[Theorem 4.1]{GM}\label{thm: GM result}
Let $\Omega\subset \RR^n$ be a domain that satisfies the $\alpha$-asymmetric condition for some $\alpha$ and $D$ be a convex obstacle. Moreover, assume that  $y_0+D$ maximizes $\lambda_1(\Omega\setminus \overline{(y+D)})$. Then there exists a constant $C_0(\alpha, n)$ such that if $\lambda_1(\Omega\setminus \overline{(y_0+D)})>C\lambda_1(\Omega)$ for some $C\geq C_0$, then  $\dist(\LLL(\Omega), y_0+D)=0$. In other words, if the obstacle is sufficiently large, then $\dist(\LLL(\Omega), y_0+D)=0$. 
\end{theorem}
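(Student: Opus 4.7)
The plan is to argue by contradiction. Suppose $y_0+D$ is an optimal translate in the sense of the theorem, yet $\dist(\LLL(\Omega), y_0+D) = d > 0$. My goal is to force the bound $\lambda_1(\Omega\setminus\overline{y_0+D}) \leq C_0(\alpha, n)\lambda_1(\Omega)$, which would then contradict the hypothesis $\lambda_1(\Omega\setminus\overline{y_0+D}) > C\lambda_1(\Omega)$ for $C \geq C_0$. The $\alpha$-asymmetry enters through an inradius estimate at the hot spots: by the Georgiev--Mukherjee bound (or the Charron--Mangoubi refinement) there exist $c = c(\alpha, n) > 0$ and $x_* \in \LLL(\Omega)$ with
\[
B(x_*, r_*) \subset \Omega, \qquad r_* \geq c(\alpha, n)\, \lambda_1(\Omega)^{-\alpha(n)}, \qquad \alpha(n) = \frac{n-1}{4} + \frac{1}{2n}.
\]
Without asymmetry one cannot rule out that $x_*$ is hidden at the tip of a thin spike, so no macroscopic ball around $x_*$ inside $\Omega$ would be available.

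Set $\rho = \tfrac{1}{2}\min(d, r_*)$. Then $B(x_*, \rho)$ is disjoint from $\overline{y_0+D}$ and contained in $\Omega$, and domain monotonicity of Dirichlet eigenvalues gives
\[
\lambda_1(\Omega\setminus\overline{y_0+D}) \leq \lambda_1(B(x_*, \rho)) = \frac{j_{n/2-1,1}^2}{\rho^2}.
\]
In the regime $d \geq r_*$ this bounds the left-hand side by $C_1(\alpha, n)\lambda_1(\Omega)^{2\alpha(n)}$. For $n = 2$ one has $2\alpha(n) = 1$, so the bound is linear in $\lambda_1(\Omega)$ and choosing $C_0 > C_1(\alpha, 2)$ yields the required contradiction at once; in higher dimensions the Charron--Mangoubi refinement $r_* \gtrsim 1/\sqrt{\lambda_1 (\log\lambda_1)^{n-2}}$ gives the same conclusion up to a logarithmic factor that can be absorbed into $C_0$ on any fixed reference range of $\lambda_1(\Omega)$.

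The main obstacle is the complementary regime $d < r_*$, in which the obstacle is within distance $o(1)$ of $\LLL(\Omega)$ but not touching it. Here I would upgrade ``small'' to ``zero'' via a Hadamard variational argument. Translating the obstacle infinitesimally in direction $v$, the derivative of the perforated eigenvalue equals
\[
\frac{d}{dt}\bigg|_{t=0}\lambda_1(\Omega\setminus\overline{y_0+tv+D}) = -\int_{\partial(y_0+D)} |\partial_\nu \phi|^2\, (v\cdot\nu)\, d\sigma,
\]
where $\phi$ is the first Dirichlet eigenfunction of $\Omega\setminus\overline{y_0+D}$. Since $\phi$ concentrates near its own max set, which by a standard perturbation argument lies within $\mathcal{O}(d)$ of $\LLL(\Omega)$, the Hopf-type boundary gradient $|\partial_\nu \phi|$ is strictly heavier on the face of $\partial(y_0+D)$ confronting $x_*$. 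Taking $v$ to point from $y_0+D$ toward $x_*$ then renders the displayed derivative strictly positive, contradicting the optimality of $y_0$. The technical heart of this step --- quantifying the asymmetric distribution of $|\partial_\nu \phi|^2$ on opposite faces of $\partial(y_0+D)$ and ensuring that $y_0 + tv + D$ remains a legal element of $\mathcal{F}(\Omega, D)$ for all small $t > 0$ (which is where the convexity of $D$ and the $\alpha$-asymmetry of $\Omega$ are both used) --- is the part of the argument I expect to demand the most care.
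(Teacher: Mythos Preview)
This theorem is not proved in the present paper: it is quoted verbatim from \cite{GM}, and the paper only records the single ingredient it needs later, namely that under the $\alpha$-asymmetry hypothesis the result of \cite{GM2} upgrades to a \emph{fully} inscribed ball of radius $r_0(\alpha,n)/\sqrt{\lambda_1(\Omega)}$ centered at any point of $\LLL(\Omega)$. So there is no ``paper's own proof'' to compare against beyond that remark. Your first step captures exactly this ingredient, but you have written the weaker, asymmetry-free exponent $\alpha(n)=\tfrac{n-1}{4}+\tfrac{1}{2n}$; the whole point of the $\alpha$-asymmetry assumption is that it forces the inscribed radius to scale like $\lambda_1(\Omega)^{-1/2}$ in \emph{every} dimension, so your detour through Charron--Mangoubi and the $2\alpha(n)>1$ worry is unnecessary once you use the correct input.

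The genuine gap is your treatment of the regime $0<d<r_*$. You are right that naked domain monotonicity only gives $\lambda_1(\Omega\setminus\overline{y_0+D})\lesssim d^{-2}$ there, which is useless as $d\to 0$, so optimality of $y_0$ must enter. But the Hadamard heuristic you propose is not a proof. The assertion that the first eigenfunction $\phi$ of the \emph{perforated} domain has its max set within $\mathcal O(d)$ of $\LLL(\Omega)$ is exactly what fails in the regime of interest: when $\lambda_1(\Omega\setminus\overline{y_0+D})\gg\lambda_1(\Omega)$ the obstacle is large relative to the wavelength of $\Omega$, the perforated eigenfunction bears no a priori resemblance to $\varphi_1^\Omega$, and there is no ``standard perturbation argument'' linking their max sets. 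Consequently the claimed sign of $\int_{\partial(y_0+D)}|\partial_\nu\phi|^2(v\cdot\nu)\,d\sigma$ is unsupported, and nothing you have written uses either the convexity of $D$ or the $\alpha$-asymmetry of $\Omega$ to secure it. To close this case you need to go back to the actual argument in \cite{GM}; the inscribed-ball estimate alone, which is all the present paper extracts, does not suffice.
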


The proof of Theorem \ref{thm: GM result} relies heavily on the fact  
(proved in \cite{GM2}) that, given any max point of the ground state eigenfunction, one can ``almost inscribe'' a wavelength scale ball centered at the max point, and under the asymmetry condition, the ball is fully inscribed with radius $r_0/\sqrt{\lambda_1}$ where $r_0$ depends on $\alpha$ and $n$. 
We refer our readers to \cite{GM2, GM} for complete details. 

Considering dumbbells $\Omega_\epsilon$, recall that $\Omega_\epsilon:=\Omega_1\cup Q_\epsilon \cup \Omega_2$, where $\Omega_1$ is convex. Observe that $\Omega_1\cup Q_\epsilon$ is $\frac{1}{4}-$asymmetric (follows from the convexity of $\Omega_1$ and the concavity of $Q_1(\epsilon), Q_2(\epsilon)$ in $Q_\epsilon$). From the proof of Theorem \ref{thm: main result}, we know that for sufficiently narrow connectors, the set of max points of the dumbbell $\LLL(\Omega_\epsilon)$ is completely contained inside $\Omega_1$. Then such a fully inscribed ball (described above) centered at $x_\epsilon\in \LLL(\Omega_\epsilon)$ 
should be contained inside $\Omega_1$. So, when dealing with dumbbell domains, it is enough to consider the $\alpha-$asymmetry on $\Omega_1\cup Q_\epsilon$. In other words, even if $\Omega_2$ has ``sharp spikes'', the spikes do not enter $\Omega_1$, and hence it does not affect the estimates for Theorem \ref{thm: GM result}. This tells us that we can relax the asymmetry condition on $\Omega_2$, and Theorem \ref{thm: GM result} is true for any dumbbell as described in Theorem \ref{thm: main result}. 

\begin{proof}[Proof of Corollary \ref{cor: obstacle}]
Let $x_0$ denote the unique maximum point of the ground state eigenfunction of $\Omega_1$ and $y_\epsilon+D$ be the optimal location of the obstacle $D$ inside the dumbbell $\Omega_\epsilon$. From Theorem \ref{thm: main result} we have that, given any $\delta>0$ there exists $\epsilon'>0$ such that $\LLL(\Omega_{\epsilon}) \subset B(x_0,\delta)$ for every $\epsilon<\epsilon'$. If the obstacle is large enough (in the sense as described in Theorem \ref{thm: GM result}), we have from Theorem \ref{thm: GM result} that for some ground state max point $x_\epsilon$ of $\Omega_\epsilon$, $x_\epsilon \in y_\epsilon+D$. Combining these two facts, we have that $B(x_0, \delta)\cap (y_\epsilon+D)\neq \emptyset$, which in turn implies that $\dist(x_0, y_\epsilon+D)<\delta$ for every $\epsilon<\epsilon'$.
\end{proof}

We end this section with the following proposition\footnote{The proposition was originally developed by Mayukh Mukherjee and the second named author in an earlier version of \cite{MS1}. The authors of the present work requested Mukherjee for joint authorship, but he declined and conveyed that there were no conflicts of interest. Nevertheless, the present authors would like to acknowledge him and thank him for his contribution.} where we address a mass concentration question in the connector of the dumbbell, following the general line of inquiry in \cite{BD, vdBB}. \cite{GM3} also establishes $L^\infty$-estimates in domains with long narrow tubes. 

\begin{proposition}\label{prop: $L^2$-norm_decay_connector}
    Consider a dumbbell domain $\Omega_\epsilon \subset \RR^n$, where $n \geq 2$ (see diagram below). Let $C(z) := \Omega_\epsilon \cap \{x\in \RR^n: x_1=z\}$ be the cross-section of $\Omega_\epsilon$ at $x_1=z\in \RR$ by a hyperplane perpendicular to the coordinate axis $x_1$ and let $\mu(z)$ be the first eigenvalue of the Laplace operator in $C(z)$, with Dirichlet boundary condition on $\pa C(z)$ and $\mu= \inf\{\mu(z): z\in (z_0, z_2)\}$. If $\lambda_{1,0}< \mu$ then, there exists a constant $D$ (depending on $\Omega_1, \Omega_2$) such that
    \begin{equation*}\label{eq: $L^2$-norm_decay_connector}
        \|\varphi_{1,\epsilon}\|_{L^2(Q_\epsilon)}^2  \leq D \|\varphi_{1,\epsilon}\|_{L^2(Q(0))}^2.
    \end{equation*}
\end{proposition}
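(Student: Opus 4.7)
Plan. The proposition is a Payne--Weinberger / Agmon-type decay estimate for the ground state along the narrow connector, and the natural approach is to reduce it to an ODE inequality for the cross-sectional $L^2$-mass
\[
f(z) \;:=\; \int_{C(z)} \varphi_{1,\epsilon}^2(z,x')\,dx',
\]
so that $\|\varphi_{1,\epsilon}\|_{L^2(Q_\epsilon)}^2 = \int f(z)\,dz$, and then integrate the resulting differential inequality.

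First I would differentiate $f$ twice in $z$. Since $\varphi_{1,\epsilon}$ vanishes on the lateral boundary of $Q_\epsilon$, which is precisely $\bigcup_z \partial C(z)$, the Reynolds transport boundary contributions drop out and
\[
f''(z) \;=\; 2\!\int_{C(z)} (\partial_1 \varphi_{1,\epsilon})^2\,dx' \;+\; 2\!\int_{C(z)} \varphi_{1,\epsilon}\, \partial_1^2 \varphi_{1,\epsilon}\,dx'.
\]
Substituting $\partial_1^2\varphi_{1,\epsilon} = -\lambda_{1,\epsilon}\varphi_{1,\epsilon} - \Delta_{x'}\varphi_{1,\epsilon}$ from the eigenvalue equation, integrating by parts on $C(z)$ (the Dirichlet condition on $\partial C(z)$ annihilates the boundary term), and invoking the Rayleigh characterization $\int_{C(z)}|\nabla_{x'}\varphi_{1,\epsilon}|^2\ge \mu(z) f(z) \ge \mu f(z)$, together with Dirichlet domain monotonicity $\lambda_{1,\epsilon} < \lambda_1(\Omega_1) = \lambda_{1,0}$, I obtain the key ODI
\[
f''(z) \;\ge\; 2\bigl(\mu - \lambda_{1,0}\bigr) f(z).
\]

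Finally I would integrate the ODI. Writing $\gamma := \sqrt{\mu - \lambda_{1,0}}$, combining the ODI with the Cauchy--Schwarz estimate $\int_{C(z)}(\partial_1\varphi_{1,\epsilon})^2 \ge (f'(z))^2/(4 f(z))$ upgrades it (where $f>0$) to the cleaner inequality $(\sqrt{f})''\ge \gamma^2\sqrt{f}$. A standard maximum-principle comparison of $\sqrt{f}$ with the explicit solution of the corresponding ODE, anchored on the reference cross-section $Q(0)$ and on the opposite end of the connector, pointwise bounds $\sqrt{f}$ by a multiple of its value on $Q(0)$ times an exponentially decaying envelope. Integrating this envelope along the $x_1$-range of $Q_\epsilon$ then yields the bound $\|\varphi_{1,\epsilon}\|_{L^2(Q_\epsilon)}^2 \le D\,\|\varphi_{1,\epsilon}\|_{L^2(Q(0))}^2$ with the explicit constant visible in the commented display.

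The main obstacle is the rigorous derivation of the ODI: the Reynolds transport contributions in the trumpets $Q_1(\epsilon),Q_2(\epsilon)$ (where $\partial C(z)$ genuinely moves with $z$) must be shown to vanish, and this is precisely where the Dirichlet condition on the lateral wall is essential; one also has to apply the cross-sectional Rayleigh quotient uniformly through the single global lower bound $\mu = \inf_z \mu(z)$. Once the ODI is in hand the remaining ODE comparison and integration are routine.
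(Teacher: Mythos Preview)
Your approach is essentially the paper's: the paper simply quotes the cross-sectional exponential decay estimate $\|\varphi_{1,\epsilon}\|_{L^2(C(z))}\le \|\varphi_{1,\epsilon}\|_{L^2(C(0))}e^{-\beta\sqrt{\mu-\lambda_{1,\epsilon}}\,z}$ from Delitsyn--Nguyen--Grebenkov as a black box, replaces $\lambda_{1,\epsilon}$ by $\lambda_{1,0}$ via monotonicity, and integrates in $z$ over the connector to obtain the explicit constant $D$. Your ODI computation for $f(z)$ (Reynolds terms killed by the lateral Dirichlet condition, Rayleigh quotient on each slice, Cauchy--Schwarz upgrade to $(\sqrt f)''\ge(\mu-\lambda_{1,0})\sqrt f$, then comparison and integration) is exactly the proof of that cited theorem, so the two arguments coincide step for step once the citation is unpacked.
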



\begin{proof} We use the following result from \cite{DNG}:

\begin{theorem}[Delitsyn, Nguyen and Grebenkov]\label{thm: DNG main theorem} 
Let $z_1= \inf\{z\in \RR: C(z)\neq \emptyset\}, \hspace{6mm} z_2= \sup\{z\in \RR: C(z)\neq \emptyset\}$. Fix $z_0\in (z_1, z_2)$. 
Let $\varphi$ be a Dirichlet-Laplacian eigenfunction in $\Omega$, and $\lambda$ the associated eigenvalue. If $\lambda< \mu$, then 
\begin{equation}\label{ineq: DNG main theorem}
    \|\varphi\|_{L^2(C(z))} \leq \|\varphi\|_{L^2(C(z_0))}e^{-\beta \sqrt{\mu-\lambda}(z-z_0)}, z>z_0.
\end{equation}
with $\beta= 1/\sqrt{2}$.
\end{theorem}

Without loss of generality, assume that $\varphi_{1,\epsilon}$ is localized on $\Omega_1$ as $\epsilon\to 0$. Following the terminology in \cite{DNG}, choosing $z_0=0$, we refer to $\Omega_1$ as the ``basic" domain and $\Omega_\epsilon\setminus \Omega_1$ the ``branch". In our case of dumbbells,  $\mu$ is attained for some $z$ for which $C(z)\in \Omega_2$. 
 \begin{figure}[ht]
\centering
\includegraphics[height=3.5cm]{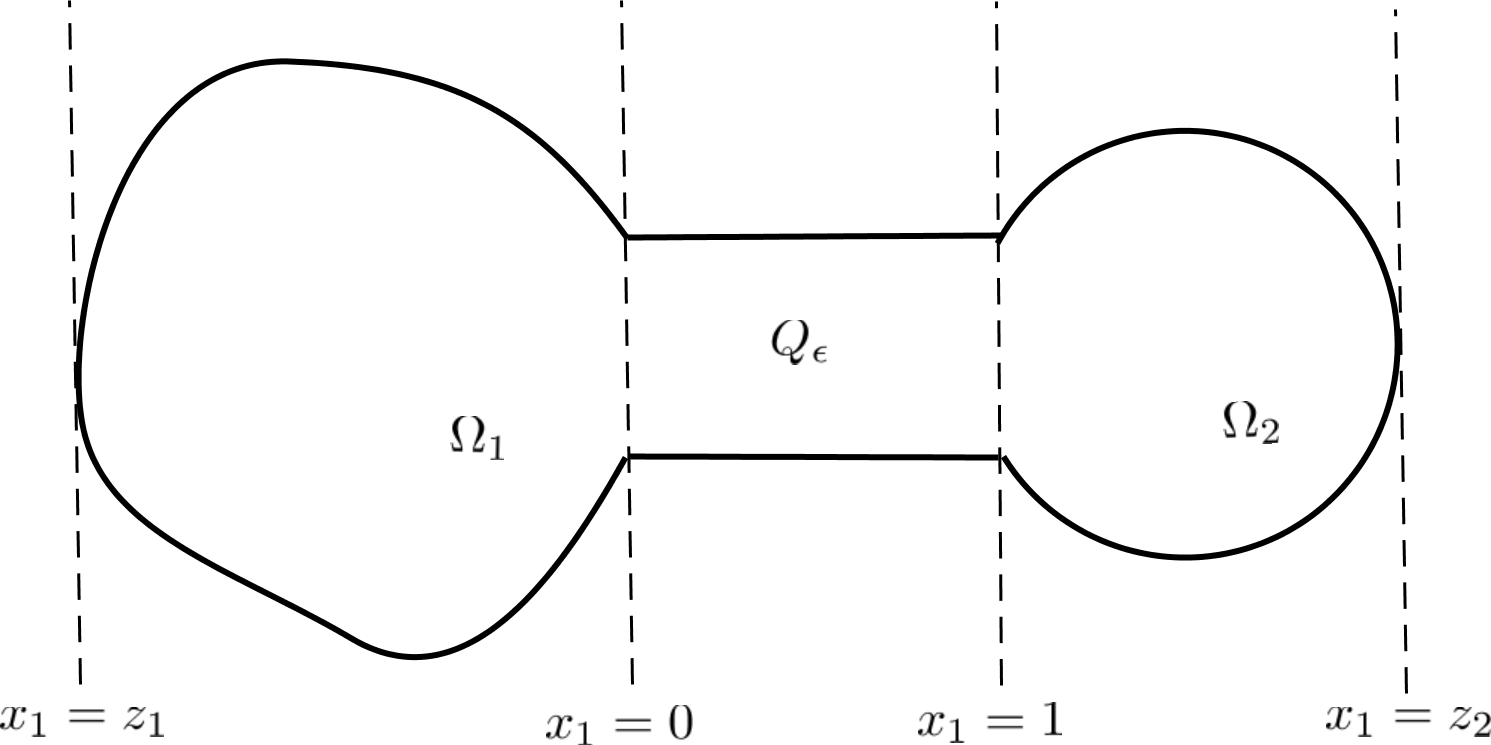}
\caption{Mass concentration in the  connector}
\end{figure}

Note that if $\epsilon'< \epsilon$, then $\lambda_{1,\epsilon'} > \lambda_{1,\epsilon}$, which implies $\lambda_{1,\epsilon}$ monotonically converges to $\lambda_{1,0}$. From the assumption that $\lambda_{1,0}<\mu$ (this assumption puts a restriction on the ``fatness'' of the base domains of the dumbbell), for small enough $\epsilon$, $\lambda_{1,\epsilon}< \mu$. 


Now, applying Theorem \ref{thm: DNG main theorem} for $\varphi_{1,\epsilon}$, we have
\begin{equation*}
    \|\varphi_{1,\epsilon}\|_{L^2(C(z))} \leq \|\varphi_{1,\epsilon}\|_{L^2(C(0))} e^{-\beta (\sqrt{\mu-\lambda_{1,\epsilon}})z}, z>0.
\end{equation*}
The assumption $\displaystyle\lambda_{1,0}< \mu$, together with the monotonicity $\lambda_{1,\epsilon}< \lambda_{1,0}$ implies $\mu- \lambda_{1,\epsilon} > \mu - \lambda_{1,0}$. Using this, we can rewrite the above inequality as
\begin{equation}
    \|\varphi_{1,\epsilon}\|_{L^2(C(z))}^2 \leq \|\varphi_{1,\epsilon}\|_{L^2(C(0))}^2 e^{-2\beta (\sqrt{\mu - \lambda_{1,0}})z}, z>0.
\end{equation}
Integrating both sides of the above inequality from $z=0$ to $z=1$, we have,
\begin{align}
\|\varphi_{1,\epsilon}\|_{L^2(Q_{\epsilon})}^2 = \int_0^1 \|\varphi_{1,\epsilon}\|_{L^2(C(z))}^2 dz &\leq \int_0^1 \|\varphi_{1,\epsilon}\|_{L^2(C(0))}^2 e^{-2\beta (\sqrt{\mu - \lambda_{1,0}})z} dz\\
&\leq D(\Omega_1, \Omega_2)\|\varphi_{1,\epsilon}\|_{L^2(C(0))}^2,
\end{align}
where $\displaystyle D(\Omega_1, \Omega_2)= \frac{1}{ \sqrt{2(\mu - \lambda_{1,0}})}(1- e^{- \sqrt{2(\mu - \lambda_{1,0}})})$.
\end{proof}

\section{Location of the second nodal line in dumbbells with Neumann boundary}\label{sec: Neumann dumbbells}

In this section, we will restrict ourselves to planar domains.  We begin with the following well-known result which gives us a topological description of the nodal set of the first non-constant Neumann eigenfunction.

\begin{proposition}[Pleijel]\label{prop: neumannn conj}
    Let $\Omega\subset\RR^2$ be a planar simply-connected Euclidean domain and $\psi$ be an eigenfunction corresponding to the first non-zero Neumann eigenvalue. Then $\psi$ cannot have a closed nodal curve. In other words, $\NNN(\psi)$ must intersect the boundary at exactly two points.
\end{proposition}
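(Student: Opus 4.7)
The plan is to argue by contradiction using a comparison between the first nontrivial Neumann eigenvalue and the first Dirichlet eigenvalue. Assume $\NNN(\psi)$ contains a closed nodal curve $\gamma \subset \Omega$. Since $\Omega$ is simply connected, the Jordan curve theorem ensures that $\gamma$ bounds a subdomain $D$ with $\overline{D} \subsetneq \Omega$. The restriction $\psi|_D$ solves $-\Delta \psi = \mu_2(\Omega)\psi$ in $D$ with $\psi = 0$ on $\partial D$, so $\mu_2(\Omega)$ appears as a Dirichlet eigenvalue of $D$, and in particular $\lambda_1(D) \leq \mu_2(\Omega)$. The strict domain monotonicity of the first Dirichlet eigenvalue under the strict inclusion $D \subsetneq \Omega$ then yields $\lambda_1(\Omega) < \lambda_1(D) \leq \mu_2(\Omega)$. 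To close the argument I invoke the Friedlander--Filonov inequality $\mu_2(\Omega) \leq \lambda_1(\Omega)$ for bounded Euclidean domains (which in the simply connected planar case also follows from Szeg\H{o}'s classical conformal-mapping argument), immediately contradicting $\lambda_1(\Omega) < \mu_2(\Omega)$. This rules out any closed nodal curve.

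For the sharper claim that $\NNN(\psi) \cap \partial\Omega$ consists of exactly two points, the strategy is topological. Courant's nodal domain theorem for Neumann eigenfunctions gives at most two nodal domains (since $\mu_2$ is the second Neumann eigenvalue, counting $\mu_1 = 0$), while the orthogonality $\int_\Omega \psi = 0$ forces $\psi$ to change sign and hence to have at least two; so there are exactly two. By real-analyticity of eigenfunctions, $\NNN(\psi)$ is a planar graph whose only singular interior vertices occur where $\psi$ and $\nabla\psi$ both vanish, and at such a point $\psi$ behaves locally like $\operatorname{Re}(z^k)$, producing an even number $2k \geq 4$ of branches. Consequently $\NNN(\psi)$ admits no interior degree-one endpoints, and combined with the absence of cycles from the first part it is a forest whose leaves all lie on $\partial\Omega$.

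A short Euler-characteristic count (treating $\partial\Omega \cup \NNN(\psi)$ as a planar graph in $S^2$) shows that a forest of $m$ trees with $N$ total leaves on $\partial\Omega$ partitions the simply connected $\Omega$ into exactly $N - m + 1$ components. Setting $N - m + 1 = 2$ and using $N \geq 2m$ (each nondegenerate tree carries at least two leaves) forces $m = 1$ and $N = 2$, which is precisely the desired conclusion. The principal technical input is the bound $\mu_2(\Omega) \leq \lambda_1(\Omega)$; everything else (strict Dirichlet domain monotonicity, Courant's theorem, and the local branching structure of real-analytic nodal sets) is classical. I expect the main subtlety to be justifying the Dirichlet eigenvalue argument when $\gamma$ passes through singular points of $\NNN(\psi)$: the subdomain $D$ is then only Lipschitz at worst, but $\psi|_D \in H^1_0(D)$ still holds and the Rayleigh-quotient characterization of $\lambda_1(D)$ applies without change.
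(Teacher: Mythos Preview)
Your argument for ruling out closed nodal curves is essentially identical to the paper's: assume a closed component, use it to bound a subdomain where $\psi$ is a Dirichlet eigenfunction, invoke strict domain monotonicity to get $\lambda_1(\Omega) < \mu_2(\Omega)$, and contradict the inequality $\mu_2(\Omega) \leq \lambda_1(\Omega)$. The only cosmetic difference is that the paper attributes this last inequality to P\'olya, while you cite Friedlander--Filonov; both are valid here.

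Where you go further is in actually justifying the ``exactly two boundary points'' claim. The paper treats this as a restatement of the absence of closed curves and offers no separate argument, whereas you supply one via Courant's theorem, the local $\operatorname{Re}(z^k)$ structure at interior nodal critical points, and an Euler-characteristic count on the forest $\NNN(\psi)$. This is a genuine addition: ruling out cycles alone does not immediately bound the number of boundary endpoints, and your combinatorial step ($N - m + 1 = 2$ together with $N \geq 2m$ forcing $m=1$, $N=2$) closes that gap cleanly. One small caveat worth flagging is that the local branching description and the graph structure of $\NNN(\psi)$ up to and including $\partial\Omega$ require some boundary regularity (so that nodal arcs meet $\partial\Omega$ transversally in finitely many points); this is standard for smooth $\partial\Omega$ but should be stated as a hypothesis.
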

\begin{proof}
    If possible, let the nodal curve $\NNN(\psi)$ be closed and $\Omega'\subset\Omega$ be the nodal domain for which $\pa\Omega=\NNN(\psi)$. Note that $\psi$ satisfies the Helmholtz equation on $\Omega'$ and $\psi|_{\pa\Omega'}=0$, which implies that the first Dirichlet eigenvalue $$\lambda_1(\Omega')=\mu_2(\Omega).$$ Using domain monotonicity, we have $\lambda_1(\Omega')>\lambda_1(\Omega)$. Now, considering the relation between Dirichlet and Neumann eigenvalues, first obtained by P{\'o}lya \cite{pol}, we have 
    $$\mu_2(\Omega) \leq \lambda_1(\Omega),$$
    which contradicts domain monotonicity. This completes the proof.
\end{proof}

Now that we know what the nodal set looks like,  our goal is to give a precise location of the first nodal set of a dumbbell. Considering a one-parameter family of dumbbell domains $\Omega_\epsilon$ (as described in Section \ref{sec: construction}) 
with Neumann boundary, we introduce some more notations that will be used in restating and proving Theorem \ref{thm: informal main result 2}.

\subsection{Notations}\label{subsec: notations}
 Fix $\epsilon_0>0$, and two half-disks $D_ {r_i}\subset\Omega_i$ ($i=1, 2$) of radius $r_i$ centered at the endpoints of the line segment $Q$ such that the half-disk $D_{r_i/2}$ is also contained in $\Omega_i$ and the semicircle $S_{r_i}$ divides the domain $\Omega_\epsilon$ into exactly two connected components with one of the components being $\Omega_i'\subset\Omega_i$ for any $\epsilon\leq \epsilon_0$ (say).   Denote $\Gamma_i$ to be the boundary of $\Omega_i'$  and $\widehat{\Gamma}_i$ the boundary of $\Omega_1\setminus D_{r_i/2}$. Given any  curve $\Gamma\subset\pa\Omega_i$, $T_{\Gamma, \delta}$ denotes a $\delta$-tubular neighbourhood of $\Gamma$. Finally, define $T_{\Gamma, \delta}^-:= T_{\Gamma, \delta} \cap \Omega_i'$ and $T_{\Gamma, \delta}^+:= T_{\Gamma, \delta}\setminus T_{\Gamma, \delta}^-$. 

\begin{figure}[ht]
\centering
\includegraphics[height=4.2cm]{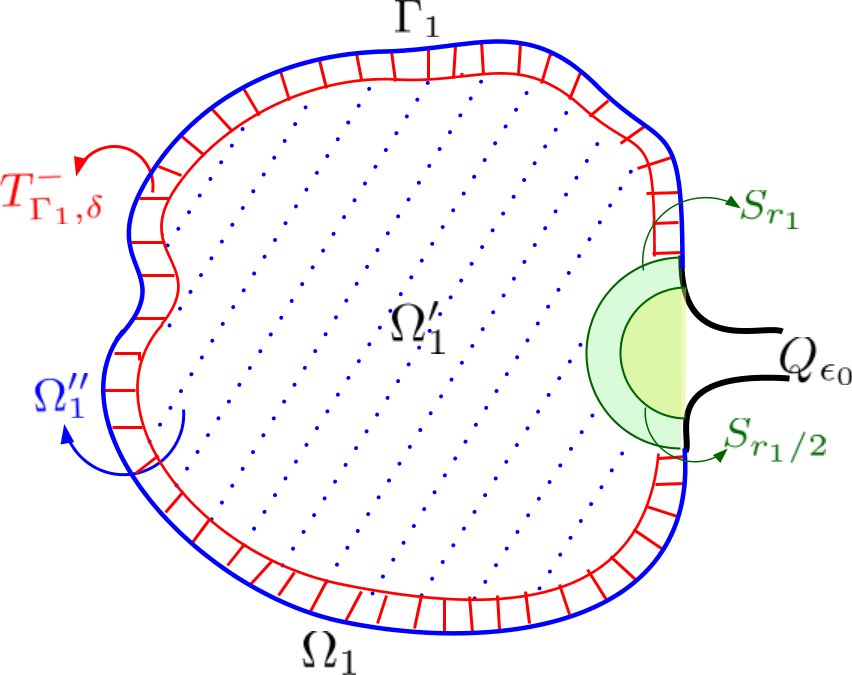}
\caption{Base domain $\Omega_1$ of $\Omega_\epsilon$}
\label{fig: subdomain}
\end{figure}

\begin{theorem}\label{thm: main result 2}
    Let $\Omega_1, \Omega_2\subset\RR^2$ be any two simply connected bounded domains, and $\Omega_{\epsilon}$ be a one-parameter family of dumbbells (as described in Section \ref{sec: construction}) whose connector widths go to zero as $\epsilon\to 0$. Fixing $r_1, r_2>0$ (however small), there exists $\epsilon'>0$ such that the first nodal line of $\Omega_{\epsilon}$ is contained in $D_{r_1}\cup Q_\epsilon\cup D_{r_2}$ for any $\epsilon<\epsilon'$. 
\end{theorem}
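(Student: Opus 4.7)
The plan is to leverage the explicit $L^2$-localization of $\psi_{2,\epsilon}$ furnished by Theorem \ref{thm: Gadyl'shin} together with the formulas (\ref{eq: coeff}), upgrade it to $L^\infty$-convergence on each compact region $\Omega_i \setminus D_{r_i}$, and then exploit the fact that the limiting constants $\alpha_1, \alpha_2$ are explicitly nonzero (indeed of opposite sign) to rule out nodal points outside $D_{r_1} \cup Q_\epsilon \cup D_{r_2}$.

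Fix $i \in \{1,2\}$ and set $u_\epsilon := \psi_{2,\epsilon} - \alpha_i$ on $\Omega_i$. Since the first Neumann eigenfunction of $\Omega_i$ is a constant, (\ref{eq: Neumann loc}) gives $\|u_\epsilon\|_{L^2(\Omega_i)} \to 0$, and $u_\epsilon$ satisfies
\[
(\Delta + \mu_{2,\epsilon}) u_\epsilon = -\mu_{2,\epsilon}\, \alpha_i \qquad \text{on } \Omega_i,
\]
together with the Neumann condition $\pa_\eta u_\epsilon = 0$ on the smooth boundary piece $\pa \Omega_i \cap \pa \Omega_\epsilon$ (which is fixed as $\epsilon \to 0$). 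Passing to the slightly larger auxiliary set $\Omega_i \setminus D_{r_i/2}$, its auxiliary boundary component $S_{r_i/2}$ sits in the interior of $\Omega_\epsilon$, while the remaining part of $\pa(\Omega_i \setminus D_{r_i/2})$ lies on the fixed Neumann boundary. Applying interior $L^p$-estimates (Theorem 8.15 of \cite{GT}) together with their Neumann boundary analogue, in the same spirit as Lemma \ref{lem: Dir sup nor conv}, one obtains
\[
\|u_\epsilon\|_{L^\infty(\Omega_i \setminus D_{r_i})} \;\leq\; C \bigl( \|u_\epsilon\|_{L^2(\Omega_i \setminus D_{r_i/2})} + \mu_{2,\epsilon}\, |\alpha_i| \bigr),
\]
where $C$ depends on $\Omega_i$ and $r_i$ but not on $\epsilon$. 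Because $\mu_{2,\epsilon} \to 0$ and the $L^2$-term tends to zero, this yields $\|\psi_{2,\epsilon} - \alpha_i\|_{L^\infty(\Omega_i \setminus D_{r_i})} \to 0$.

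Now formulas (\ref{eq: coeff}) ensure $\alpha_1 < 0 < \alpha_2$ with $|\alpha_i|$ a strictly positive constant depending only on $|\Omega_1|, |\Omega_2|$. Hence for all $\epsilon$ below some threshold $\epsilon'$, $\psi_{2,\epsilon}$ is bounded away from zero on $\Omega_1 \setminus D_{r_1}$ (negative) and on $\Omega_2 \setminus D_{r_2}$ (positive); this forces $\NNN(\psi_{2,\epsilon}) \cap \bigl((\Omega_1 \setminus D_{r_1}) \cup (\Omega_2 \setminus D_{r_2})\bigr) = \emptyset$, so the nodal set is contained in $D_{r_1} \cup Q_\epsilon \cup D_{r_2}$.

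The main obstacle is ensuring that the $L^2 \to L^\infty$ upgrade has constants \emph{uniform in} $\epsilon$ despite the shrinking connector mouth; this is precisely the reason for working on $\Omega_i \setminus D_{r_i}$ rather than on all of $\Omega_i$, as the half-disks $D_{r_i}$ insulate the working region from the connector so that the relevant elliptic geometry is independent of $\epsilon$. A secondary subtlety is that the bare $L^2$-localization in Theorem \ref{thm: Gadyl'shin} only yields $\alpha_1^2 + \alpha_2^2 = 1$, which does not by itself exclude $\alpha_i = 0$ on one side; the explicit nonvanishing of both coefficients given by (\ref{eq: coeff}) (from \cite{Ve, HaVe}) is what closes the sign argument.
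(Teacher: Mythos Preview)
Your overall strategy is exactly the paper's: upgrade the $L^2$-convergence of $\psi_{2,\epsilon}$ to the explicit nonzero constants $\alpha_i/\sqrt{|\Omega_i|}$ on each $\Omega_i'=\Omega_i\setminus D_{r_i}$ to $L^\infty$-convergence, and then use the opposite signs in (\ref{eq: coeff}) to exclude zeros. You also correctly isolate the $\epsilon$-uniformity issue and resolve it by working on the fixed regions $\Omega_i\setminus D_{r_i}$.

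The gap is in the sentence ``Applying interior $L^p$-estimates (Theorem 8.15 of \cite{GT}) together with their Neumann boundary analogue, in the same spirit as Lemma \ref{lem: Dir sup nor conv}.'' Theorem 8.15 of \cite{GT} is a \emph{Dirichlet} global bound, and Lemma \ref{lem: Dir sup nor conv} uses it precisely because $\varphi_{1,\epsilon}-\varphi_{1,0}$ vanishes on $\partial\Omega_{\epsilon}$. Here $u_\epsilon$ satisfies a Neumann condition on $\partial\Omega_i\cap\partial\Omega_\epsilon$, and there is no off-the-shelf ``Neumann analogue'' in \cite{GT} you can cite. This is not a technicality one can wave away: the bulk of the paper's proof of the supporting lemma is devoted to exactly this point. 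The paper splits $\Omega_i'$ into a compactly contained interior piece (where Theorem 8.24 of \cite{GT} applies directly) and a boundary layer $T^-_{\Gamma_i,\delta}$; on the layer it introduces Fermi coordinates $(t,s)$ along $\widehat{\Gamma}_i$, rewrites the Laplacian as an operator $L$ with $\partial_s$-Neumann data at $s=0$, and then performs an \emph{even reflection} of $\psi_{2,\epsilon}$ and of the principal coefficients (odd reflection of the $\partial_s$-coefficient) across $s=0$, following \cite{HuShiXu15}. This produces a uniformly elliptic operator $\widetilde L$ with bounded measurable coefficients on the doubled strip $\widetilde{\mathcal R}_{2\delta}$, reducing the boundary estimate to an interior one so that Theorem 8.24 of \cite{GT} again applies. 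Without this reflection step, your displayed inequality for $\|u_\epsilon\|_{L^\infty(\Omega_i\setminus D_{r_i})}$ is unproved.

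A minor point: the limiting constant is $\alpha_i\psi_1^{\Omega_i}=\alpha_i/\sqrt{|\Omega_i|}$, not $\alpha_i$ itself; this does not affect the sign argument but your $u_\epsilon$ should be $\psi_{2,\epsilon}-\alpha_i/\sqrt{|\Omega_i|}$.
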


From the discussion in Section \ref{sec: construction} we have that \begin{equation}\label{eqn: L^2-convergence}
    \|\psi_{2,\epsilon}-\alpha_1 \psi^{\Omega_1}_1\|_{L^2(\Omega_1)}+\|\psi_{2,\epsilon}-\alpha_2 \psi^{\Omega_2}_1\|_{L^2(\Omega_2)} +\|\psi_{2,\epsilon}\|_{L^2(Q_\epsilon)} \to 0\quad \mbox{as} \; \epsilon \to 0
\end{equation}
where $\psi^{\Omega_1}_1\equiv \frac{1}{\sqrt{|\Omega_1|}},~ \psi^{\Omega_2}_1 \equiv \frac{1}{\sqrt{|\Omega_2|}}$ and  
$\alpha_1, \alpha_2$ are as in (\ref{eq: coeff}).  We redefine $\psi_1^{\Omega_i}= \alpha_i\psi_1^{\Omega_i}$. Note that now the limiting eigenfunctions of the base domains are not $L^2$-normalized. Also, recall that both $\psi_1^{\Omega_1}$ and $\psi_1^{\Omega_2}$ are non-zero constants. The crux of the proof of Theorem \ref{thm: main result 2} lies in the following 

\begin{lemma}
$\|\psi_{2,\epsilon}-\psi_1^{\Omega_i}\|\to 0$ in $L^\infty(\Omega_i')$ where $\Omega_i'$ is defined as in Subsection \ref{subsec: notations}
\end{lemma}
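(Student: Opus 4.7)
The plan is to upgrade the $L^2$-convergence in (\ref{eqn: L^2-convergence}) to an $L^\infty$-convergence on the smaller set $\Omega_i'$ using standard elliptic regularity applied to the difference $u_\epsilon := \psi_{2,\epsilon} - \psi_1^{\Omega_i}$. Since $\psi_1^{\Omega_i}$ is constant on $\Omega_i$, this difference satisfies
$$-\Delta u_\epsilon \;=\; \mu_{2,\epsilon}\,\psi_{2,\epsilon} \qquad \text{in } \Omega_i,$$
with $\partial_\eta u_\epsilon = 0$ on $\partial\Omega_i\cap\partial\Omega_\epsilon$: the Neumann condition is inherited from $\psi_{2,\epsilon}$, and the constant part contributes trivially.

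The key geometric observation is that $\Omega_i' = \Omega_i\setminus\overline{D_{r_i}}$ is separated from the ``artificial'' interior cut $S_{r_i/2}$ by a buffer of width at least $r_i/2$ inside the intermediate set $\Omega_i\setminus\overline{D_{r_i/2}}$. Consequently, at every point of $\Omega_i'$ one can inscribe a ball---or, for points on $\partial\Omega_i\cap\partial\Omega_\epsilon$, a half-ball tangent to the smooth Neumann boundary---of radius comparable to $r_i$ which lies inside $\Omega_i\setminus\overline{D_{r_i/2}}$ and whose only contact with $\partial\Omega_\epsilon$ is a smooth Neumann piece. On each such (half-)ball I would apply the local $L^\infty$--$L^2$ Moser-type estimate---Theorem~8.17 of \cite{GT} in the interior case and its Neumann analogue (obtained by even reflection across the smooth boundary) in the boundary case---to produce, for some $q>2$ and a constant $C$ independent of $\epsilon$,
$$\|u_\epsilon\|_{L^\infty(\Omega_i')} \;\leq\; C\bigl(\|u_\epsilon\|_{L^2(\Omega_i\setminus\overline{D_{r_i/2}})} + \mu_{2,\epsilon}\,\|\psi_{2,\epsilon}\|_{L^{q/2}(\Omega_i\setminus\overline{D_{r_i/2}})}\bigr).$$

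To conclude, I would send both terms on the right to zero. The first vanishes directly from (\ref{eqn: L^2-convergence}), and the second because $\mu_{2,\epsilon}\to 0$ while $\|\psi_{2,\epsilon}\|_{L^{q/2}}$ stays bounded uniformly in $\epsilon$. The last uniform bound is obtained by first applying the same Moser-type estimate directly to the eigenfunction equation $-\Delta\psi_{2,\epsilon}=\mu_{2,\epsilon}\psi_{2,\epsilon}$ to get a uniform $L^\infty$ bound on $\psi_{2,\epsilon}$ on $\Omega_i\setminus\overline{D_{r_i/2}}$, using only that $\mu_{2,\epsilon}$ stays bounded and that $\psi_{2,\epsilon}$ is $L^2$-normalized. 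The argument for $\Omega_2'$ is symmetric.

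The step I expect to be the main technical obstacle is securing the constant $C$ uniformly in $\epsilon$. This is manageable because $\partial\Omega_\epsilon$ varies with $\epsilon$ only inside the connector $Q_\epsilon$---well away from the fixed set $\Omega_i\setminus\overline{D_{r_i/2}}$---so the relevant portion of $\partial\Omega_\epsilon$ is a fixed smooth piece of $\partial\Omega_i$ independent of $\epsilon$, and the elliptic constant depends only on $\Omega_i$, on $r_i$, and on a uniform upper bound for $\mu_{2,\epsilon}$ (which holds trivially since $\mu_{2,\epsilon}\searrow 0$).
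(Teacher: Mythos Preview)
Your proposal is correct and follows essentially the same strategy as the paper: upgrade the $L^2$-convergence to $L^\infty$ on $\Omega_i'$ via Moser-type estimates from \cite{GT}, treating the interior and the Neumann boundary separately and handling the latter by even reflection across the smooth boundary. The paper differs only in minor execution details---it carries out the reflection explicitly in Fermi-type coordinates (following \cite{HuShiXu15}) and writes the equation as $(\Delta+\mu_{2,\epsilon})(\psi_1^{\Omega_i}-\psi_{2,\epsilon})=\mu_{2,\epsilon}\psi_1^{\Omega_i}$, which makes the right-hand side a constant times $\mu_{2,\epsilon}$ and thereby sidesteps your bootstrap for $\|\psi_{2,\epsilon}\|_{L^{q/2}}$.
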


\begin{proof}

Consider two subdomains of $\Omega_1'$, a boundary layer $\Omega^*:=T_{\widehat{\Gamma}_1, 2\delta}^-$ (choice of $\delta$ to be determined later) and 
$\Omega_1'':=\Omega_1'\setminus T_{\Gamma_1, \frac{\delta}{2}}$
which is compactly contained inside $\Omega_1$. Since $T_{\Gamma_1, \delta}^-\cup \Omega_1''= \Omega_1'$, in order to show $L^\infty$-convergence on $\Omega_1'$, it is enough to show the convergence on $\Omega_1''$ and $T_{\Gamma_1, \delta}^-$. 
\subsection*{Convergence outside the boundary layer}

 Considering the following equation
\begin{align*}
    (\Delta + \mu_{2,\epsilon})(\psi^{\Omega_1}_1 - \psi_{2,\epsilon})&= \mu_{2,\epsilon} \psi_1^{\Omega_1} \text{ on } \Omega_1,
\end{align*}
and applying Theorem 8.24 of \cite{GT}, we have that on $\Omega''_1 \Subset \Omega_1$ for any $q>2$ and $\epsilon\leq \epsilon_0$ 
\begin{align*}
    \|\psi^{\Omega_1}_1 - \psi_{2,\epsilon}\|_{L^\infty(\overline{\Omega_1''})} 
    &\leq  C_1\left(\|\psi^{\Omega_1}_1 - \psi_{2,\epsilon}\|_{L^2(\Omega_1)}+ \|\mu_{2,\epsilon} \psi_1^{\Omega_1} \|_{L^{q/2}(\Omega_1)}\right)\\
    &=  C_1\left(\|\psi^{\Omega_1}_1 - \psi_{2,\epsilon}\|_{L^2(\Omega_1)}+ |\mu_{2,\epsilon}| \cdot|\psi_1^{\Omega_1}|\cdot |\Omega_1|^{\frac{2}{q}}\right)\to 0  \quad \text{as } \epsilon \to 0,
\end{align*}
where $C_1=C_1(q,\frac{\delta}{2}, \mu_{2,\epsilon_0}, |\Omega_{\epsilon_0}|)$. 

\subsubsection*{Extension of Neumann eigenfunctions across the boundary}
Note that these Moser-type estimates used above can only be applied to subdomains compactly contained inside $\Omega_1$. If we can extend the Neumann eigenfunction across the boundary and reduce the estimates near the boundary to an interior estimate, then considering that $T_{\Gamma_1, \delta}^-\Subset T_{\widehat{\Gamma}_1, 2\delta}$ we can use Theorem 8.24 of \cite{GT} on $T_{\Gamma_1, \delta}^-$ to obtain our required convergence.


Note that on $\Omega^*$, we have 
\begin{equation}\label{eq: Laplace Euclidean tube}
     -\Delta_{\Omega^*}\psi_{2,\epsilon} =-\frac{\partial^2\psi_{2,\epsilon}}{\partial x^2} - \frac{\partial^2\psi_{2,\epsilon}}{\partial y^2} = \mu_{2,\epsilon}\psi_{2,\epsilon} 
\end{equation}
with outward normal derivative $\frac{\pa\psi_{2,\epsilon}}{\pa \eta}=0$ on $\widehat{\Gamma}_1$. 

Let $\gamma_0:(-a, a) \to \widehat{\Gamma}_1$ be the arc length parametrization of $\widehat{\Gamma}_1$. On the inner $2\delta$-neighbourhood of $\widehat{\Gamma}_1$, we can define a coordinate system $\Phi: \mathcal{R}_{2\delta} \to \Omega^*$ as follows:
\begin{align*}
    (t,s) \to (x,y):= \gamma_0(t) + s\widehat{n}_0(t),
\end{align*}
where $\widehat{n}_0(t)$
is the inward unit normal vector at $\gamma_0(t)$, and 
$$\mathcal{R}_{2\delta}:=\{(t, s)\in \RR^2: |t|<a \text{ and } 0\leq s<2\delta\}.$$
The determinant of the Jacobian of the above transformation is given by
$$\det(D\Phi(t,s)) = \widehat{n}_0(t)\cdot \widehat{n}_0(t) +s \widehat{n}_0(t)\cdot \ddot{\gamma}_0(t) = -1+c(t)s,$$ 
where $c$ is a continuous function on $\widehat{\Gamma}_1$. Since $\overline{\widehat{\Gamma}}_1$ is compact, we choose $\delta$ sufficiently small and independent of $t$ such that $D\Phi(t,s)$ is invertible for all $0<s < 2\delta$ and for all $t\in \widehat{\Gamma}_1$.

In $(t,s)$-coordinates, the Laplace-Beltrami operator near a small neighborhood of the boundary $\{s=0\} \cap \mathcal{R}_{2\delta}$ is
\begin{equation}\label{eqn: Laplace in Fermi}
        -L = - \frac{1}{\|\Dot{\gamma}_s(t)\|^2}\partial_{tt} - \partial_{ss}+\frac{\langle \dot{\gamma}_s(t),\ddot{\gamma}_s(t)\rangle}{\|\Dot{\gamma}_s(t)\|^4} \partial_t  - \frac{\langle \Dot{\gamma}_s(t),\Dot{n}_0(t) \rangle}{\|\dot{\gamma}_s(t)\|^2}\partial_s,
    \end{equation}
where $\gamma_s(t):=\Phi(t, s)$ denotes the layer curve.
Then, under these new coordinates, the Helmholtz equation (\ref{eq: Laplace Euclidean tube}) changes to 
\begin{equation}
    -L\psi'_{2, \epsilon} = \mu_{2, \epsilon} \psi'_{2, \epsilon},
\end{equation}
with the boundary condition $\displaystyle\frac{\partial {\psi_{2,\epsilon}'}}{\partial s} = 0$ on $\overline{\mathcal{R}}_{2\delta}\cap \{s=0\}$ where $\psi_{2,\epsilon}' = \psi_{2,\epsilon} \circ \Phi$.

We rename $\psi'_{2,\epsilon}$ as $\psi_{2,\epsilon}$ for notational convenience. Also, we rename the coefficients of $-L$ as $\displaystyle G_{11}=-\frac{1}{\|\Dot{\gamma}_s(t)\|^2};~~G_{22}= -1;~~ F_1= \frac{\langle \dot{\gamma}_s(t),\ddot{\gamma}_s\rangle}{\|\Dot{\gamma}_s(t)\|^4};~~ F_2= -\frac{\langle \Dot{\gamma}_s(t),\Dot{n}_0(t) \rangle}{\|\dot{\gamma}_s(t)\|^2}$. Following the technique used in \cite{HuShiXu15}, we now extend the coefficients $G_{11},G_{22},F_1$ and the eigenfunction $\psi_{2,\epsilon}$ as even functions in $s$ and the coefficient $F_2$ as odd function in $s$ across $\widehat{\Gamma}_1$ to the domain 
$$\widetilde{\mathcal{R}}_{2\delta}= \{(t, s)\in \RR^n:  |t| < a, |s|<2\delta\}.$$ 
In effect, we are extending the Laplace operator on $\Omega^*$ to an operator on $T_{\widehat{\Gamma}_1, 2\delta}$. We call the above extensions $\widetilde{G}_{ii}$, $\widetilde{F}_i$ ($i=1, 2$), and  obtain the extended operator $-\widetilde{L}$ on $\widetilde{\mathcal{R}}_{2\delta}$ defined as
$$-\widetilde{L}=\widetilde{G}_{11}\pa_{tt}+ \widetilde{G}_{22}\pa_{ss}+ \widetilde{F}_1 \pa_t + \widetilde{F}_2 \pa_s.$$
Then the extended eigenfunction  $\widetilde{\psi}_{2,\epsilon}$ satisfies
$$-\widetilde{L}\widetilde{\psi}_{2,\epsilon}=\mu_{2,\epsilon} \widetilde{\psi}_{2,\epsilon} \quad \text{on $\widetilde{\mathcal{R}}_{2\delta}$}.
$$ 

\begin{remark}
    By choosing appropriate coordinates within a tubular neighborhood of the boundary, the above extension holds in higher dimensions as well. As such, the $L^\infty$-convergence in this lemma and the arguments of Theorem \ref{thm: main result 2} similarly apply to higher dimensions. 
\end{remark}

\subsection*{Convergence on the boundary layer}

From the above computation, $\widetilde{\mathcal{R}}_{2\delta}$ is identified with $T_{\widehat{\Gamma}_1, 2\delta}$ and as mentioned above, we have that $T_{\Gamma_1, \delta}^-\Subset T_{\widehat{\Gamma}_1, 2\delta}$. Moreover, when restricted to $T_{\Gamma_1, \delta}^-$, we have that $\widetilde{L}=L$ and $\widetilde{\psi}_{2,\epsilon}=\psi_{2,\epsilon}$. Using the fact that a $C^2$-function $f$ on $[0,\infty)$ satisfying $f'(0) = 0$ has an even extension which is also $C^2$ on $\RR$, we have that $\widetilde{\psi}_{2,\epsilon}$ is at least $C^2$ on $\widetilde{\mathcal{R}}_{2\delta}$. 
Also note that $\widetilde{L}$ satisfy conditions $(8.5)$ and $(8.6)$ of Chapter 8 in \cite{GT}. We point out that the fact that certain coefficients of $\widetilde{L}$ are discontinuous on $\widetilde{\mathcal{R}}_{2\delta}\cap \{s=0\}$ is not an issue when applying Theorem 8.24 of \cite{GT}. 
Now, consider the equation 
\begin{align*}
    \widetilde{L}(\psi^{\Omega_1}_1 - \widetilde{\psi}_{2,\epsilon})&= \mu_{2,\epsilon} \psi_1^{\Omega_1} \text{ on } \mathcal{\widetilde{R}}_{2\delta}.
\end{align*}
The $L^2$-convergence of eigenfunctions on $T^-_{\Gamma_1, \delta}$ (which follows from Theorem \ref{thm: Gadyl'shin}), implies the $L^2$-convergence of the corresponding extended eigenfunctions on $T_{\widehat{\Gamma}_1, 2\delta}$. Now, applying Theorem 8.24 of \cite{GT}, we have 
\begin{align*}
    \|\psi^{\Omega_1}_1 - \psi_{2,\epsilon}\|_{L^\infty(T_{\Gamma_1, \delta}^-)} 
    &\leq  C_2\left(\|\psi^{\Omega_1}_1 - \widetilde{\psi}_{2,\epsilon}\|_{L^2(\widetilde{\mathcal{R}}_{2\delta})}+ \|\mu_{2,\epsilon} \psi_1^{\Omega_1} \|_{L^{q/2}(\widetilde{\mathcal{R}}_{2\delta})}\right)\\
    &=  C_2\left(\|\psi^{\Omega_1}_1 - \widetilde{\psi}_{2,\epsilon}\|_{L^2(\widetilde{\mathcal{R}}_{2\delta})}+ |\mu_{2,\epsilon}| \cdot|\psi_1^{\Omega_1}|\cdot |\widetilde{\mathcal{R}}_{2\delta}|^{\frac{2}{q}} \right) \to 0  \quad \text{as } \epsilon \to 0,
\end{align*}
where $C_2= C_2(q,\frac{\delta}{2} , \mu_{2,\epsilon_0}, |\Omega_{\epsilon_0}|)$. One can repeat a similar computation on $\Omega_2'$ as well, which gives our required convergence.
\end{proof}
We are now ready to prove that the second nodal line of the Neumann dumbbell cannot enter too deep inside either of the two base domains of a dumbbell with a sufficiently narrow connector. 

\begin{proof}[Proof of Theorem \ref{thm: main result 2}]
Recall that $\psi_1^{\Omega_i}$ is a non-zero constant and hence does not change sign in $\Omega_i$ ($i=1,2$). Moreover, from the values of $\alpha_1$ and $\alpha_2$ in (\ref{eq: coeff}) we have that $\psi_1^{\Omega_1}$ and $\psi_1^{\Omega_2}$ are of opposite signs. 
From Proposition \ref{prop: neumannn conj}, we know that the nodal line $\NNN(\psi_{2,\epsilon})$ intersects the boundary at exactly two points and divides $\Omega_\epsilon$ into two components for every $\epsilon$. 

We prove that $\NNN(\psi_{2,\epsilon})$ does not enter $\Omega_1'$ for sufficiently small $\epsilon$. If possible, let there exist a sequence $\{\epsilon_i\}\to 0$ as $i\to \infty$ such that $\NNN(\psi_{2,\epsilon_i})$ enters $\Omega_1'$  for every $i\in \NN$. Then $\NNN(\psi_{2,\epsilon_i})$ intersects $S_{r_1}$ for every $\epsilon_i$. Let $y_i\in \NNN(\psi_{2,\epsilon_i}) \cap S_{r_1}$. Then there exists and subsequence $\{j\}\subset \{i\}$ such that $y_j\to y$ for some $y\in S_{r_i}$ and $\psi_{2,\epsilon_i}(y_j)=0$. Since $\varphi^{\Omega_1}_1<0$ in $\Omega_1$ and $\displaystyle \|\psi_{2,\epsilon}- \psi_1^{\Omega_1}\|_{C^0(\overline{\Omega_1'})} \to 0$, we have that 
\begin{align*}
    |\psi_1^{\Omega_1}(y)|=\lim_{j\to \infty}|\psi_1^{\Omega_1}(y_j)|=\lim_{j\to \infty}|\psi_1^{\Omega_1}(y_j)|= \lim_{j\to \infty}|(\psi_{2,\epsilon_j}-\psi_1^{\Omega_1})(y_j)|=0,
\end{align*}
a contradiction. Similarly, $\NNN(\psi_{2,\epsilon})$ does not enter $\Omega_2'$ for sufficiently small $\epsilon$. This completes the proof.
\end{proof}

\subsection*{Acknowledgements} 
 The first named author expresses gratitude to the Indian Institute of Technology Bombay for providing a conducive working atmosphere. The research of the first named author was supported by the Prime Minister’s Research Fellowship (PMRF) program (Fellowship no. 1301599).  The second named author would like to thank Iowa State University for funding the research and providing ideal working conditions. The authors would like to acknowledge useful conversations and correspondence with Mayukh Mukherjee, Gabriel Khan, and Xuan Hien Nguyen.

\bibliography{main}
\bibliographystyle{alpha}

\end{document}